\documentclass[11pt,twoside,reqno,centertags]{amsart}

\thanks{AMS Subject Classifications: 86A10, 34B20}

 \usepackage{amsmath,amsthm,amsfonts,amssymb}
\usepackage[mathscr]{euscript}
 \usepackage[hidelinks]{hyperref} 
 \usepackage{mathrsfs} 
\usepackage{graphicx}
\usepackage{color}
  \usepackage{epsfig}
  \usepackage{enumitem}
  \usepackage{csquotes}
  \usepackage{mathtools}
 
 \newtheorem{theorem}{Theorem}
 \newtheorem{lemma}[theorem]{Lemma}
 \newtheorem{proposition}[theorem]{Proposition}
 \theoremstyle{definition}
 \newtheorem{remark}[theorem]{Remark}
 
 \newcommand{\N}{\mathbb N}
 \newcommand{\R}{\mathbb R}
 \newcommand{\C}{\mathbb C}
 \newcommand{\dd}{\mathrm d}
 \DeclareMathOperator{\Real}{Re}
  
 \pagestyle{myheadings}
\thispagestyle{empty}
    \textwidth = 5 true in
   \textheight = 7   true in

 \voffset= 10 true pt
 \hoffset= 55 true pt
 \oddsidemargin=0true in
 \evensidemargin=0true in

\begin{document}
 
\title{On the propagation of mountain waves: Linear theory}
\date{}
\maketitle     
 
\vspace{ -1\baselineskip}

{\small
\begin{center}
 {\sc Adrian Constantin} and {\sc J\"org Weber} \\
Faculty of Mathematics, University of Vienna, AT-1090 Vienna, Austria
\end{center}
}

\numberwithin{equation}{section}
\allowdisplaybreaks

\smallskip

 \begin{quote}
\footnotesize
{\bf Abstract.}  
We derive and establish a solution concept for the linear mountain wave problem in two dimensions. After linearizing the governing equations and a change of variables, the problem can be stated as a Dirichlet boundary value problem for a Helmholtz equation in terms of the vertical wind profile in the upper half-plane, with altitude-dependent potential (the Scorer parameter). To single out the correct solution, we have to make use of a radiation condition which is, due to the different physical situation, different from the classical Sommerfeld radiation condition for electromagnetic or acoustic waves. We rigorously develop a transform method and construct the physically correct solution, following Lyra's monotonicity criterion for mountain waves. In this procedure, we clearly recognize the two typical types of mountain waves: vertically propagating waves and trapped lee waves. This paper is the first rigorous work on Helmholtz-like equations in the upper half-plane subject to such a non-classical radiation condition.
\end{quote}

\section{Introduction}

Atmospheric gravity waves, influenced by air compressibility and stratification, can propagate both vertically and horizontally. A common example is mountain waves, which form when strong, stable winds flow over a mountain range, 
provided that the wind direction has a sizable component which is perpendicular to the range. Some mountain waves propagate over long distances (up to tens of km vertically, well into the stratosphere, and hundreds of km horizontally downwind), 
while in others the streamlines steepen and overturn like breaking ocean waves, often producing significant turbulence (see Fig.~\ref{fig-b}). The US 
National Transportation Safety Board records from 1990 to 2017 list 42 accidents in which mountain wave turbulence was a primary contributing factor. 
While turbulence is a well-acknowledged hazard to aviation, the mere encounter with a mountain wave can be hazardous for aircrafts. For example, in May 2023 a small Cessna 182 airplane crashed in Alaska, 
with the pilot and the passenger sustaining only minor injuries, after encountering 
at an altitude of about 4 km mountain wave conditions with downdraft speeds exceeding $11\;\mathrm{m\,s^{-1}}$ (much larger than the climb rate of the aircraft). 

The meteorological literature distinguishes between two types of mountain waves: vertically propagating waves and trapped lee waves. For both wave types viscosity effects are not relevant, in contrast to the rotor circulations 
produced by the fast near-surface downslope winds on the lee side of the mountains, which are inherently viscous vortex phenomena located below the elevation of the mountaintops. Throughout this paper we will only discuss inviscid flow patterns. 
Vertically propagating mountain waves occur when static stability increases above the mountain peak and when the wind speed does not increase significantly with height. 
Since the air density decreases with height, these waves amplify with altitude and can reach the lower stratosphere (the bottom of which is about 10 km above the ground at mid-latitudes), unless they break first, causing strong turbulence. 
Their vertical velocities can exceed $30\;\mathrm{m\,s^{-1}}$, making them a risk even to large aircraft -- see the case study \cite{bdw} for nonbreaking, vertically propagating mountain waves as a hazard to high-flying aircraft. The waves that do not break in the troposphere or in the stratosphere 
are ultimately dissipated before reaching the mesosphere (which extends from about 50 to 85 km above ground) 
by the vertical transfer of infrared radiation between the warm and cool regions within the wave and the surrounding atmosphere \cite{Dur}. On the other hand, trapped mountain waves occur when wind speed 
above the mountain increases sharply with height, blocking upward wave propagation, or when stability decreases in the layer just above the mountain top (typically due to 
thermal inversion -- a reversal of the normal behavior of temperature in the troposphere, with a warmer layer over cool air). As the wave energy is trapped vertically, these waves will propagate horizontally 
and they can extend hundreds of km downwind of the mountain range that creates them.

\begin{figure}[ht]
	\noindent\includegraphics[width=11cm]{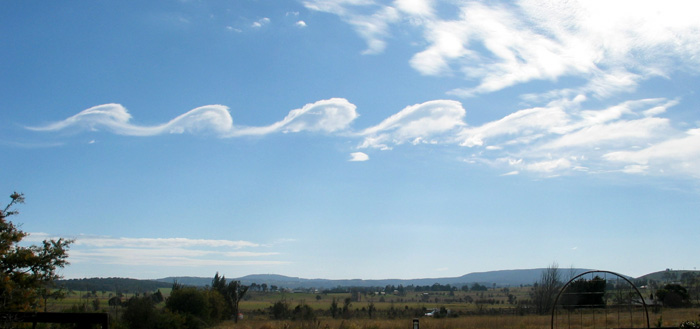}
	\caption{\footnotesize{Breaking mountain waves over Mount Duval in Australia (Source: $\copyright$GRAHAMUK, CC BY-SA 3.0): the wave slopes steepens until the top of the wave overruns the lower part.}}
	\label{fig-b}
\end{figure}

Mountain waves can occur in dry conditions without visible cloud markers, in which case they have to be monitored using infrared satellite imagery -- 
they cause temperature fluctuations and redistribute atmospheric water vapor, whose variations can be detected (see Fig.~\ref{fig-vp}). However, if the air is humid enough, 
mountain waves lead to specific cloud formations downwind: lee wave clouds, cap clouds and lenticular clouds occur as manifestations of trapped waves, while nacreous clouds (also known as mother-of-pearl clouds) 
are associated to vertically propagating waves reaching the stratosphere. 

\begin{figure}[ht]
	\noindent\includegraphics[width=7cm]{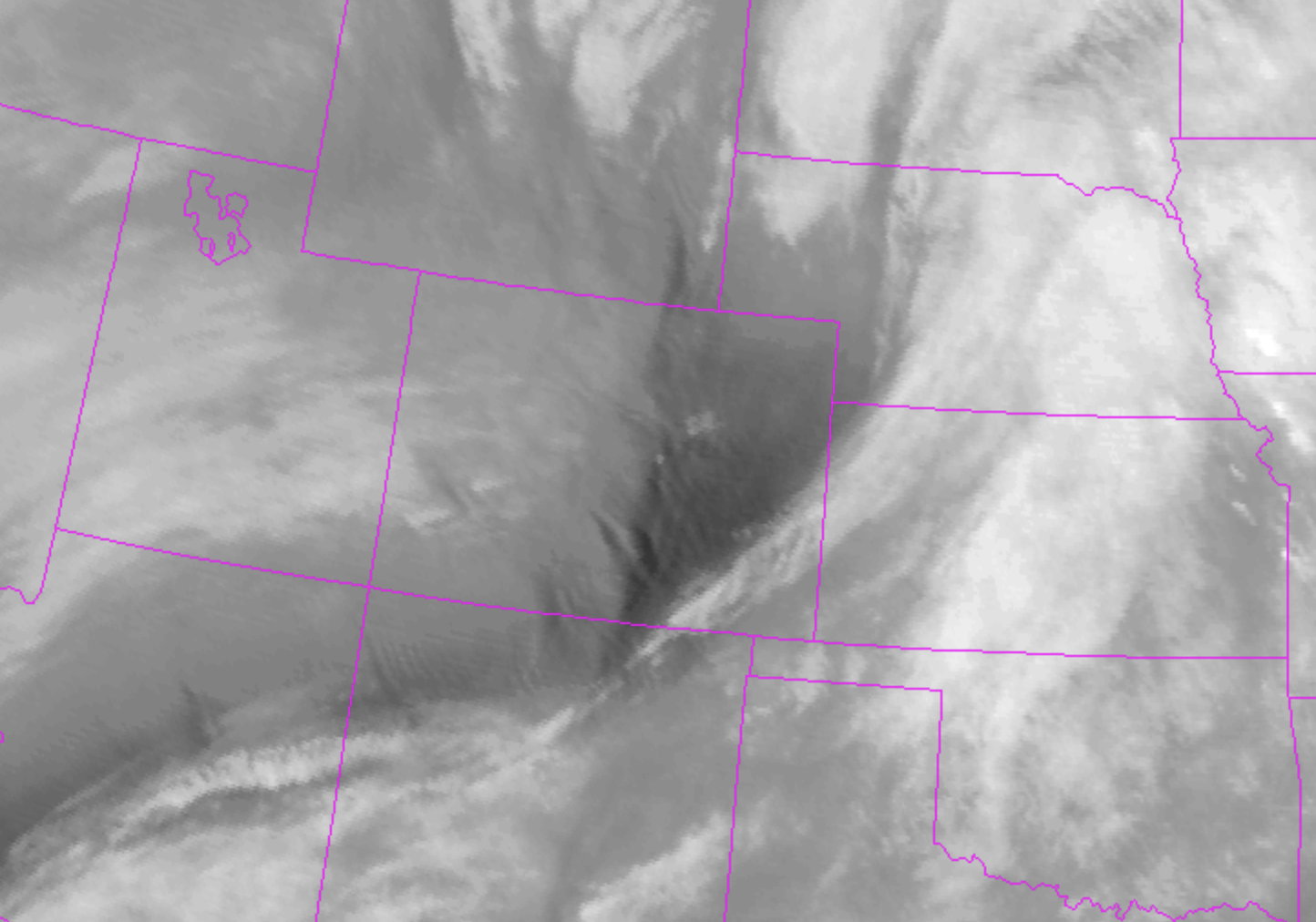}
	\caption{\footnotesize{Water vapor satellite images showing vertically propagating mountain waves over the Rocky Mountains in Colorado by visualizing atmospheric moisture patterns, with 
			relatively dry regions rendered as dark and high-humidity regions shown in white (Source: COMET$\circledR$ https://learn.meted.ucar.edu $\copyright$ 1997--2025). By converting radiation 
			measurements of the effective layer (the highest altitude with appreciable water vapor) into temperature, this imagery can trace upper-level atmospheric flows 
			when cloud condensation does not occur and therefore visible imagery is powerless.}}
	\label{fig-vp}
\end{figure}

When vertically propagating mountain waves enter the stratosphere, they sometimes generate mesmerizing nacreous clouds, well above the more common tropospheric clouds.  
The temperature fluctuations induced by the mountain waves, in conjunction with a very cold stratosphere, play a crucial role in the formation of these clouds: the cold phase of the waves can lower local temperatures sufficiently to trigger 
cloud formation -- in the extremely dry stratosphere, only at very low temperatures (below $-85\,^\circ$C) is there enough condensation to produce clouds. If these clouds are composed of similar-sized crystals, diffraction and interference of light
waves forms generate bright iridescent colors (see Fig.~\ref{fig-nl}). 

\begin{figure} [ht]
	\begin{center}$
		\begin{array}{cc}
			\includegraphics[height=1.54in]{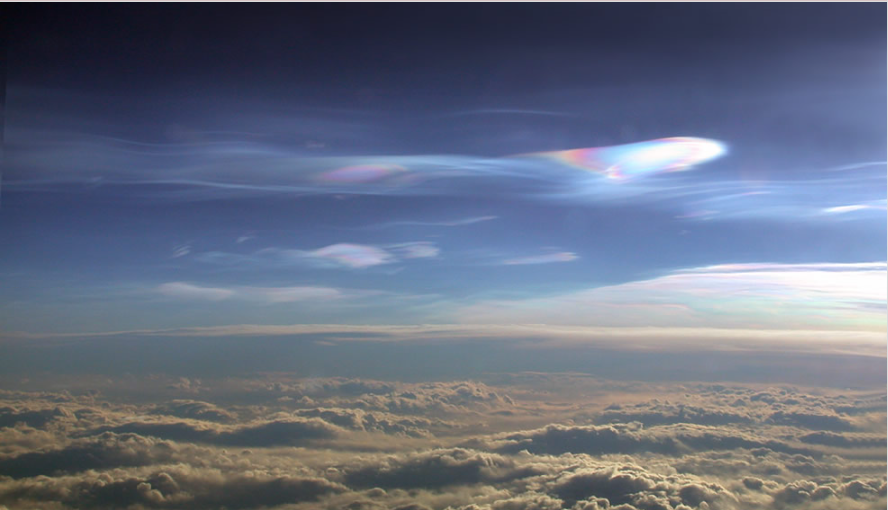} & \includegraphics[height=1.54in]{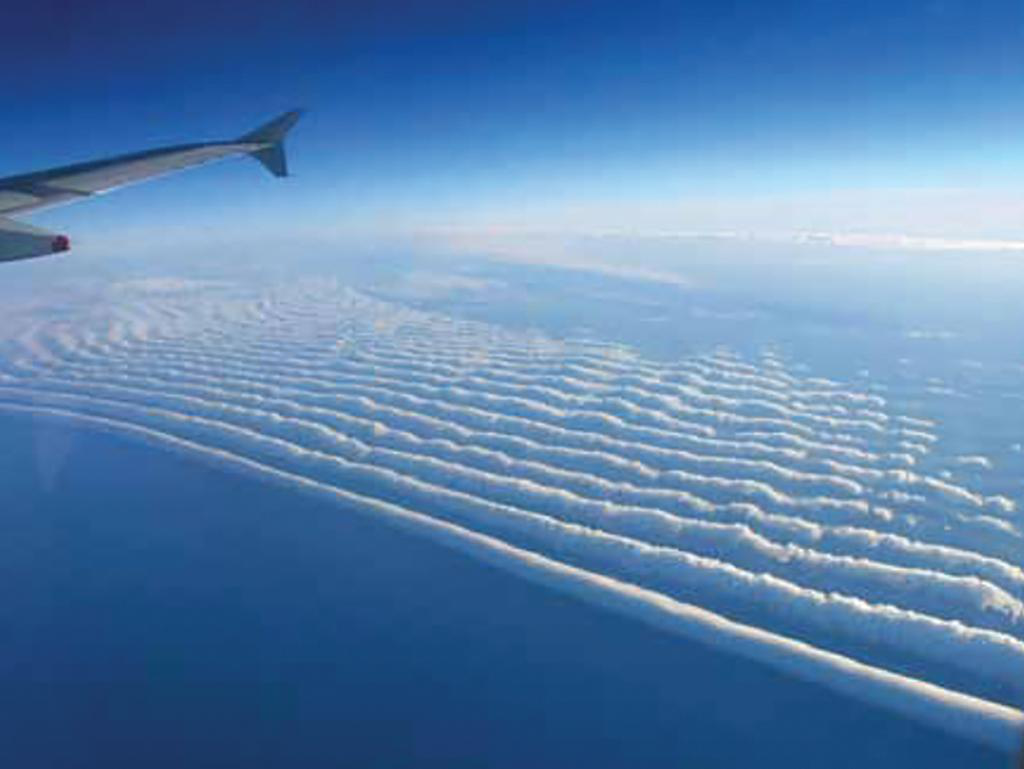}
		\end{array}$
	\end{center}
	\caption{\footnotesize{Left: photograph of  nacreous clouds taken from an aircraft flying in the lower stratosphere, above the highest clouds in the troposphere. 
			The intensely bright wavelike nacreous clouds are higher still (about 20-30 km in the stratosphere). (Source: $\copyright$ Paul A. Newman/NASA).
			Right: aerial photograph of lee wave clouds (Reproduced
			with permission from Comet Program/NOAA.). The lee wave clouds are not carried along with the air flow: they are standing clouds, being continuously formed at
			the leading edge of each wave and continuously eroded at the trailing edge.}}
	\label{fig-nl}
\end{figure}

Lee wave clouds are narrow cloud bands perpendicular to the wind direction and parallel to the mountain chain. The wave length of the trapped mountain waves that produce them ranges from about 3 km to more than 40 km, and the waves 
extend downwind from the mountain range for up to 1000 km. These clouds are confined to a limited altitude band, generally extending a few km above the mountain barrier. Typically a thermal inversion aloft 
acts like a lid that traps the mountain waves and prevents vertical propagation, so that these waves propagate horizontally. The most common inversion is the nocturnal radiational cooling: on clear, calm nights the Earth radiates heat to space, cooling the ground 
and the air just above it, while the air higher up remains warmer. Spectacular undular cloud formations (see Fig.~\ref{fig-nl}) can form by condensation near the top boundary of the inversion
layer: as the warm air holds more moisture, it condenses when it comes into contact with the colder air encountered as the crests of the wave undulations rise, while the wave troughs, where the clouds erode by evaporation, remain below the visible clouds.

A sheared wind profile with wind speed increasing with height above the mountain top can also impede the vertical propagation of mountain waves. In such circumstances, if humidity is high, cap clouds and lenticular clouds may form. 
As the moisture-packed warm air, forced by the wind to rise against the mountain slope, comes into contact with the chilly layer near the mountain top, it starts to cool down. It continues to cool down until it reaches dew point,
and cloud droplets form by condensation near the top of the mountain. If horizontal strong winds just above the mountain top prevent the upward motion of the hitherto rising air, the air flow is guided towards the lee side of the mountain 
and the faster wind above shears off the top of the cloud. On the lee side, the air warms up as it descends down the slope and the cloud dissipates. This sinking air warms by a process called compressional warming, 
since the pressure increases during the descent. As the air sinks and warms, it evaporates the cloud droplets, giving the cloud a smooth edge on the lee side of the mountain. Thus a hat, dome-shape cloud is created 
over the mountain top, this being the trademark of a cap cloud (see Fig.~\ref{fig-lc}). When a layer of dryer air separates two layers of moist air, two cap clouds may form on top of each other, hovering over the same
mountain top. If the wind speed strengthens aloft, but at some height above the mountain top, a similar process leads to the appearance of lenticular clouds having a layered
or stacked shape in the form of a lens or saucer (often mistaken for a UFO). While cap clouds occur directly over a mountain peak, lenticular clouds form on the leeward side of the mountain. 
The wave formed in the wake of the air lifted over the mountain top, due to the interplay between buoyancy and gravity, might lead to the occurrence of several lenticular clouds. 
As the air ascends, it expands and cools. If the air has sufficient moisture, when it reaches the crest of the wave and the temperature drops below dew point, condensation takes
place, allowing for the development of a cloud. The gravitational force resists this vertical motion and the ascending air particles seek to descend. As they do, the air will contract and warm, causing the newly formed cloud to evaporate. 
This up-and-down motion of the air can continue for hundreds of km downwind of the mountain range. Moreover, if there are alternating layers of relatively dry and moist
air aloft, multiple lenticular clouds can be stacked on top of each other (see Fig.~\ref{fig-lc}). Also, for a persistent prevailing wind, the crests and
troughs of the wave of air continue to form downwind, and a series of lenticular clouds may form. What sets lenticular clouds apart from other cloud types is their immobility: most clouds are carried along by winds but lenticular clouds tend to remain in place, 
even for extended periods of several hours, despite being subjected to strong horizontal winds. However, their stationary appearance is deceptive, as these clouds are in a constant state of formation and dissipation: 
they form near the peak of a mountain wave and then disintegrate shortly after passing that point. 

\begin{figure} [ht]
	\begin{center}$
		\begin{array}{cc}
			\includegraphics[height=1.62in]{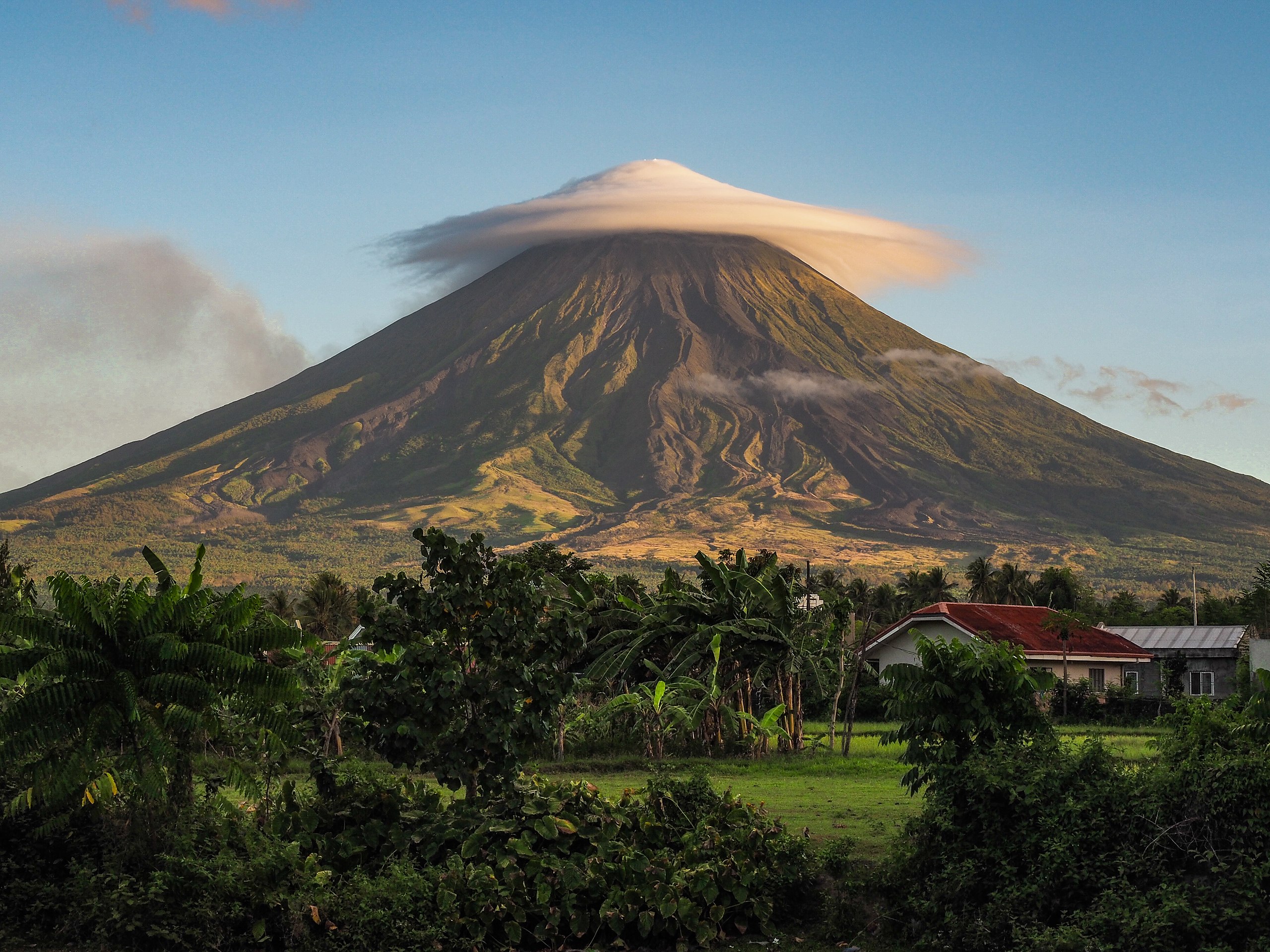} & \includegraphics[height=1.62in]{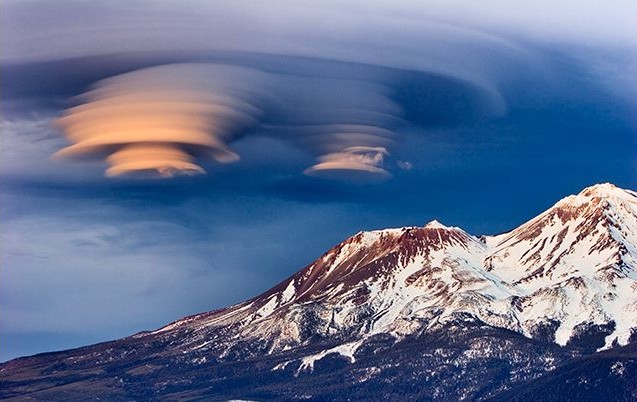}
		\end{array}$
	\end{center}
	\caption{\footnotesize{Left: cap cloud covering the 2462 m high summit crater of Mayon Volcano, Philippines, on 23 April 2019 (Credit: Patryk Reba CC BY-SA 4.0). 
			Right: lenticular clouds over the 4322 m high Mount Shasta in California, on 25 October 2013 (Credit: NOAA).}}
	\label{fig-lc}
\end{figure}

The strongest mountain waves are forced by long ridges that are sufficiently narrow to neglect the dynamical influence of the Coriolis force and to ensure that the setting of two-dimensional inviscid flows is adequate \cite{Dur}. 
To predict the basic structure of a mountain wave, such as its dependence on air stratification, wind speed and mountain profile, one typically relies on linear theory, assuming the mountain to be small in comparison with 
the wavelength of the mountain wave. While nonlinear effects are essential to investigate wave breaking, linear theory is quite accurate for the description of mountain waves that are not near breaking. 
The typical strategy here is to linearize the governing equations, and to subsequently reduce the obtained linear system of equations to a single equation for the 
vertical velocity component -- the physically most significant quantity. This so-called Scorer equation \cite{Scorer49,Smith79} holds in the upper half-plane, subject to boundary conditions depending on the mountain profile.

In principle, the Scorer equation is of Helmholtz-type and therefore its unique solvability calls for a carefully chosen radiation condition. In the context of electromagnetic or acoustic waves, here (versions of) the classical Sommerfeld radiation condition \cite{Sommerfeld1912} comes into play, motivated by the physical principle that such waves are emitted radially outward of a source. This is clearly not physically appropriate for mountain waves, where different patterns are observed 
upstream and downstream. While the question of how to correctly choose a solution based on physical arguments led to some controversy among meteorologists in the last century, by now the radiation condition originally formulated 
by Lyra \cite{Lyra43} has gained widespread acceptance (see the discussion in \cite{Smith79, Smith19}) and we will rely on it.

The motivation of our paper is fourfold: First, typically in the meteorological research 
literature a Scorer equation is found that includes a quite simple Scorer parameter. This, however, relies on various simplifications such as the Boussinesq approximation. We do not make 
any such simplifications and derive a completely general Scorer equation. Moreover, we introduce a change of variables such that the Scorer equation transforms into a Helmholtz-like equation. All of this is done providing explicit formulas of all coefficients in terms of the background state about which we linearize. Second, we develop a transform method based on Weyl--Titchmarsh theory to carefully construct the physically correct solution formula. In particular, we see that the radiation condition dictates which choices in the procedure have to be made. Moreover, in this solution formula one can clearly distinguish between the different types of mountain waves -- vertically propagating waves and trapped lee waves; see Fig.~\ref{fig-sk} and \eqref{eq:formula_w}--\eqref{eq:K^t}. The strength of this transform method is that it works for general Scorer parameters and that it nicely splits the Green's function into different pieces that all have a clear physical interpretation. Third, we provide explicit solution formulas corresponding to some examples of Scorer parameters. A visualization (see Figs.~\ref{fig:KernelConstant} and \ref{fig:KernelMorse}) shows that in one example vertically propagating waves are visible and trapped lee waves absent, while in another example a trapped lee wave is the predominant feature of the linearized flow. We are not aware of other such explicit examples giving rise to trapped lee waves. Fourth, we put our transform method and solution formula on rigorous mathematical ground, a step absent in the 
meteorological research literature. For this we need some assumptions on the Scorer parameter, basically that it approaches a constant at large altitudes, so that Weyl--Titchmarsh and scattering theory for Schrödinger operators on the half line become applicable. Moreover, we provide exact asymptotics for the vertical velocity component upstream which is the mathematical manifestation of Lyra's radiation condition.

\begin{figure}[ht]
	\noindent\includegraphics[width=6cm]{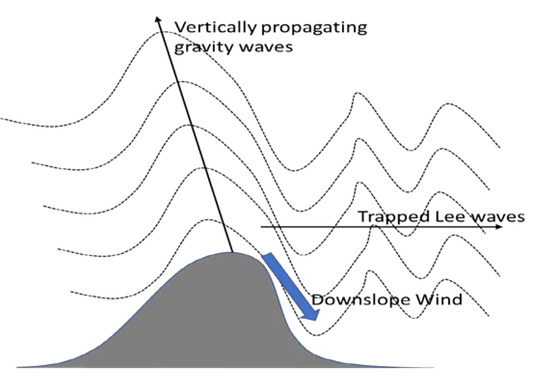}
	\caption{\footnotesize{Sketch of the two main types of mountain waves.}}
	\label{fig-sk}
\end{figure}

Our paper is structured as follows: First, in Section~\ref{sec:equations} we introduce the set of governing equations and their linearized versions, where we consider small perturbations of a horizontal background wind generated by a topographic perturbation. Then, we show, without any simplifications, how to derive the Scorer equation and how to put it in normal form in Section~\ref{sec:Scorer}. Here, by \enquote{normal form} we mean a Helmholtz-like equation with altitude-dependent potential. In particular, we provide explicit, general formulas for the appearing coefficients which reduce, under some approximations, to the formulas typically found in the meteorological research literature. In Section~\ref{sec:derivation}, we 
physically motivate Lyra's radiation condition and discuss how it pertains to our problem. Moreover, we introduce the transform method and construct the physically relevant solution formula. We also provide upper and lower bounds on the number of trapped lee waves depending on the Scorer parameter in Proposition~\ref{prop:trapped}. Then, in Section~\ref{sec:examples} we illustrate our method and solution formula using specific examples. In particular, we consider a constant Scorer parameter, where we recover Lyra's formula, and a Scorer parameter derived from the classical Morse potential. Finally, in Section~\ref{sec:rigorous} we rigorously justify our solution formula, establishing suitable estimates on the Green's kernel, and show (see Theorem~\ref{thm:main}) 
that the vertical velocity component is indeed monotone upstream at fixed altitude, which is Lyra's radiation condition.

\section{The governing equations and their linearization}\label{sec:equations}
Let us denote $\Omega\coloneqq\{(x,z)\in\R^2:z>h(x)\}$, where $z=h(x)$ is the profile of the mountain range (which extends in the $y$-direction without change of shape), with $h$ vanishing at $\pm\infty$. Moreover, by convention, $x=-\infty$ is upstream (windward) and $x=+\infty$ is downstream (leeward). Then, we are concerned with the following equations \cite{Constantin23,Smith79}: The Euler equations
\begin{subequations}\label{eq:system_nonlinear}
	\begin{align}
		u_t + uu_x + wu_z &= -\frac{p_x}{\rho}&\text{in }\Omega,\\
		w_t + uw_x + ww_z &= -\frac{p_z}{\rho} - g&\text{in }\Omega\label{eq:Euler2}\\
		\intertext{for the velocity field $(u,w)$, the pressure $p$ and the density $\rho$; the equation of mass conservation}
		\rho_t + u\rho_x + w\rho_z + \rho(u_x+w_z) &= 0&\text{in }\Omega,\\
		\intertext{the ideal gas law}
		p &= \rho T&\text{in }\Omega\label{eq:gas_law},\\
		\intertext{for the temperature $T$; and the first law of thermodynamics}
		T_t + uT_x + wT_z - \frac{\mu}{\rho}(p_t + up_x + wp_z) &= 0&\text{in }\Omega.\label{eq:thermolaw}\\
		\intertext{Moreover, at the boundary we have the kinematic condition}
		w &= h_xu&\text{on }\partial\Omega.
	\end{align}
\end{subequations}
Here, the equations have already been put into non-dimensional form, to which dimensional quantities (with primes) are related by
\begin{gather*}
	t' = (L'/U') t,\qquad (x',z') = L' (x,z),\qquad (u',w') = U' (u,w),\\
	\rho' = \bar\rho' \rho, \qquad p' = \bar\rho' U'^2 p,\qquad T' = (U'^2/\mathfrak R') T.
\end{gather*}
Above, the typical scales are $L'=2\;\mathrm{km}$, $U'=20\;\mathrm{m\,s^{-1}}$ and $\bar\rho'=1\;\mathrm{kg\,m^{-3}}$, while $\mathfrak R'\approx 287\;\mathrm{m^2\,s^{-2}\,K^{-1}}$ is the gas constant for dry air. Moreover, the non-dimensional constants in \eqref{eq:Euler2} and \eqref{eq:thermolaw} are
\[g = g'L'/U'^2 \approx 49, \qquad \mu = \mathfrak R' / c_p' \approx 0.287,\]
where $g'$ is the gravitational acceleration and $c_p'\approx 1000\;\mathrm{m^2\,s^{-2}\,K^{-1}}$ is the specific heat of dry air at an atmospheric pressure of $1000\;\mathrm{mb}$. Notice that for mountain waves 
outside regions of active precipitation it is reasonable to neglect heat sources (see \cite{Constantin23}), which would otherwise appear on the right-hand side of \eqref{eq:thermolaw}. For more details on this set of equations we refer to \cite{Constantin23,Smith79}. We also mention that the mountain wave problem admits explicit, particular nonlinear Gerstner-type solutions \cite{Constantin23}, also when including additional effects such as the Earth's rotation \cite{Henry24} or precipitation \cite{Lyons25,LyonsMcCarney25}.

Since we are interested in a time-independent linearized problem, we consider small perturbations of a background state $(u_0,\rho_0,p_0,T_0)$, which is independent of $x$, subject to a flat topography. More precisely we plug the ansatz
\begin{align*}
	u(x,z) &= u_0(z) + \varepsilon \bar u(x,z) + \mathcal O(\varepsilon^2),& w(x,z) &= \varepsilon \bar w(x,z) + \mathcal O(\varepsilon^2),\\
	\rho(x,z) &= \rho_0(z) + \varepsilon \bar \rho(x,z) + \mathcal O(\varepsilon^2),&
	p(x,z) &= p_0(z) + \varepsilon \bar p(x,z) + \mathcal O(\varepsilon^2),\\
	T(x,z) &= T_0(z) + \varepsilon \bar T(x,z) + \mathcal O(\varepsilon^2),& h(x) &= \varepsilon \bar h(x) + \mathcal O(\varepsilon^2)
\end{align*}
into the system \eqref{eq:system_nonlinear} and balance the equations, at least formally, up to order $\varepsilon$. First, the background state has to satisfy
\begin{subequations}\label{eq:background}
	\begin{align}
		p_0' &= -g\rho_0 & \text{in }\R^2_+,\\
		p_0 &= \rho_0 T_0 & \text{in }\R^2_+
	\end{align}
\end{subequations}
by \eqref{eq:Euler2} and \eqref{eq:gas_law}, while all other equations trivialize. Here, $\R^2_+\coloneqq\{(x,z)\in\R^2:z>0\}$, and $'\coloneqq\frac{\dd}{\dd z}$ for quantities which only depend on $z$. Notice that \eqref{eq:background} implies the relations
\begin{subequations}\label{eq:background_otherrelations}
	\begin{align}
		\frac{p_0'}{p_0} &= -\frac{g}{T_0},\\
		\frac{\rho_0'}{\rho_0} &= \frac{p_0'}{p_0}-\frac{T_0'}{T_0} = -\frac{g+T_0'}{T_0}.
	\end{align}
\end{subequations}

At order $\varepsilon$, the (linearized) problem reads \cite{Constantin23,Smith79}
\begin{subequations}\label{eq:system_linear}
	\begin{align}
		\rho_0 (u_0 \bar u_x + \bar w u_0') &= -\bar p_x & \text{in }\R^2_+,\label{eq:Euler1_linear}\\
		\rho_0 u_0 \bar w_x &= -\bar p_z - g\bar\rho & \text{in }\R^2_+,\label{eq:Euler2_linear}\\
		u_0 \bar\rho_x + \bar w \rho_0' + \rho_0 (\bar u_x + \bar w_z) &= 0 & \text{in }\R^2_+,\label{eq:mass_conservation_linear}\\
		\bar p &= \bar\rho T_0 + \rho_0 \bar T & \text{in }\R^2_+,\label{eq:gas_law_linear}\\
		\rho_0(u_0 \bar T_x + \bar w T_0') &= \mu (u_0 \bar p_x + \bar w p_0') & \text{in }\R^2_+,\label{eq:thermolaw_linear}\\
		\bar w &= \bar h_x u_0 & \text{on }\partial\R^2_+.\label{eq:kinematic_BC_linear}
	\end{align}
\end{subequations}

\section{The Scorer equation and its normal form}\label{sec:Scorer}
We now show that \eqref{eq:system_linear} can be viewed as a boundary-value problem for a Helmholtz-like equation on $\R^2_+$, where the derivation is to some extent similar as in \cite{Constantin23,Scorer49,Smith79}. Here, typically, for the sake of simplicity the static stability
\begin{subequations}\label{eq:beta}
	\begin{align}
		\beta&\coloneqq\frac{T_0'}{T_0}-\mu\frac{p_0'}{p_0}=-g(1-\mu)\frac{\rho_0}{p_0}-\frac{\rho_0'}{\rho_0}=\frac{g\mu+T_0'}{T_0}\\
		\intertext{with}
		\beta'&=-g(1-\mu)\frac{\rho_0'}{p_0}-g^2(1-\mu)\frac{\rho_0^2}{p_0^2}-\frac{\rho_0''}{\rho_0}+\frac{\rho_0'^2}{\rho_0^2}
	\end{align}
\end{subequations}
is introduced, using \eqref{eq:background}. Differentiating \eqref{eq:gas_law_linear} with respect to $x$ and combining the result with \eqref{eq:Euler1_linear}, \eqref{eq:mass_conservation_linear}, and \eqref{eq:thermolaw_linear} we get
\begin{subequations}\label{eq:linear_px_rhox}
	\begin{align}
		\bar\rho_x&=\frac{\left(\rho_0'u_0-\rho_0u_0'+\frac{\beta p_0}{(1-\mu)u_0}\right)\bar w +\rho_0u_0\bar w_z}{\frac{p_0}{(1-\mu)\rho_0}-u_0^2},\\
		\bar p_x&=\frac{(\rho_0'u_0-\rho_0u_0'+\beta\rho_0u_0)\bar w +\rho_0u_0\bar w_z}{1-(1-\mu)\frac{\rho_0}{p_0}u_0^2}.
	\end{align}
\end{subequations}
Notice that the denominators here do not vanish since typically $u_0\approx 1$, $\rho_0/p_0\le 10^{-2}$. From this, we can now derive an equation for only $\bar w$,
\begin{align}\label{eq:PDE_bar_w_original}
	\bar w_{xx}+A\bar w_{zz}+B\bar w_z+C\bar w=0 \qquad \text{in }\R^2_+,
\end{align}
with $A$, $B$, $C$ being expressions in terms of the background state. Indeed, differentiating \eqref{eq:Euler2_linear} with respect to $x$ and inserting \eqref{eq:linear_px_rhox} yields
\begin{align*}
	&\rho_0u_0\bar w_{xx}\\
	&=-\bar p_{xz}-g\bar\rho_x\\
	&=-\frac{(\rho_0''u_0-\rho_0u_0''+\beta'\rho_0u_0+\beta\rho_0'u_0+\beta\rho_0u_0')\bar w+(\rho_0'u_0-\rho_0u_0'+\beta\rho_0u_0)\bar w_z}{1-(1-\mu)\frac{\rho_0}{p_0}u_0^2}\\
	&\quad\,-\frac{(\rho_0'u_0+\rho_0u_0')\bar w_z+\rho_0u_0\bar w_{zz}}{1-(1-\mu)\frac{\rho_0}{p_0}u_0^2}\\
	&\omit\hfill$\displaystyle{}-\frac{(1-\mu)((\rho_0'u_0-\rho_0u_0'+\beta\rho_0u_0)\bar w +\rho_0u_0\bar w_z)\left(\frac{\rho_0'}{p_0}u_0^2-\frac{\rho_0p_0'}{p_0^2}u_0^2+2\frac{\rho_0}{p_0}u_0u_0'\right)}{\left(1-(1-\mu)\frac{\rho_0}{p_0}u_0^2\right)^2}$\\
	&\quad\,-\frac{g(1-\mu)\rho_0}{p_0}\frac{\left(\rho_0'u_0-\rho_0u_0'+\frac{\beta p_0}{(1-\mu)u_0}\right)\bar w+\rho_0u_0\bar w_z}{1-(1-\mu)\frac{\rho_0}{p_0}u_0^2}.
\end{align*}
This, after collecting the terms, using \eqref{eq:background}, \eqref{eq:background_otherrelations}, and \eqref{eq:beta}, and simplifying, is \eqref{eq:PDE_bar_w_original} with
\begin{subequations}\label{eq:ABC}
	\begin{align}
		A\coloneqq{}&\frac{1}{1-(1-\mu)\frac{\rho_0}{p_0}u_0^2} = \frac{1}{1-(1-\mu)u_0^2T_0^{-1}},\label{eq:A}\\
		B\coloneqq{}& A^2\left(\frac{\rho_0'}{\rho_0}+\frac{2(1-\mu)\rho_0u_0u_0'}{p_0}+\frac{g(1-\mu)\rho_0^2u_0^2}{p_0^2}\right)\\
		={}& A^2\left(\frac{-g-T_0'+2(1-\mu)u_0u_0'}{T_0}+\frac{g(1-\mu)u_0^2}{T_0^2}\right),\nonumber\\
		C\coloneqq& -A^2\left(\frac{\rho_0'u_0'}{\rho_0u_0}+\frac{u_0''}{u_0}+\frac{g\rho_0'}{\rho_0u_0^2}+\frac{2(1-\mu)\rho_0u_0'^2}{p_0}-\frac{(1-\mu)\rho_0u_0u_0''}{p_0}\right.\nonumber\\
		&\;\left.{}+\frac{2g(1-\mu)\rho_0u_0'}{p_0u_0}+\frac{g^2(1-\mu)\rho_0}{p_0u_0^2}+\frac{g(1-\mu)\rho_0^2u_0u_0'}{p_0^2}+\frac{g^2\mu(1-\mu)\rho_0^2}{p_0^2}\right)\label{eq:C}\\
		=&-A^2\left(\frac{u_0''}{u_0}+\frac{(1-\mu)(2u_0'^2-u_0u_0'')}{T_0}-\frac{u_0'(g+T_0'-2g(1-\mu))}{u_0T_0}\right.\nonumber\\
		&\qquad\quad\left.{}-\frac{g(g\mu+T_0')}{u_0^2T_0}+\frac{g(1-\mu)(g\mu+u_0u_0')}{T_0^2}\right).\nonumber
	\end{align}
\end{subequations}
Here, for convenience, the respective first expressions are given in terms of $(u_0,\rho_0,p_0)$, while the second expressions are in terms of $(u_0,T_0)$. Given observational data, the computations in terms of $(u_0,T_0)$ are likely preferably numerically since $T_0$ remains, with increasing $z$, typically at the same order of magnitude, while $\rho_0$ and $p_0$ decay exponentially.

Returning now to the general formulas without simplifications, we now want to write \eqref{eq:PDE_bar_w_original} in normal form, that is, as a Helmholtz-like equation
\begin{equation}\label{eq:PDE_chi_Helmholtz}
	\chi_{xx}+\chi_{\zeta\zeta}+F\chi=0 \qquad \text{in }\R^2_+,
\end{equation}
where $\chi$ is related to $\bar w$ and $\zeta\in(0,\infty)$ to $z$ by a change of variables. To this end, we clearly simply have to put the ordinary differential operator $A\frac{\dd^2}{\dd z^2}+B\frac{\dd}{\dd z}+C$ into Liouville's normal form. Thus, we introduce the new independent variable
\[\zeta\coloneqq\int_0^z\frac{\dd z'}{\sqrt{A(z')}}.\]
Therefore,
\[\frac{\partial}{\partial z}=A^{-1/2}\frac{\partial}{\partial\zeta},\]
so that
\begin{align*}
	\bar w_z&=A^{-1/2}\bar w_\zeta,\\
	\bar w_{zz}&=A^{-1}\bar w_{\zeta\zeta}-\frac12A^{-3/2}A'\bar w_\zeta.
\end{align*}
Inserting this in \eqref{eq:PDE_bar_w_original} yields
\begin{align}\label{eq:PDE_bar_w_step1}
	0=\bar w_{xx}+A\bar w_{zz}+B\bar w_z+C\bar w=\bar w_{xx}+\bar w_{\zeta\zeta}+D\bar w_\zeta+C\bar w
\end{align}
with
\begin{align}\label{eq:D}
	D\coloneqq A^{-1/2}\left(B-\frac12A'\right).
\end{align}
Now introduce the new dependent variable
\[\chi\coloneqq E\bar w\]
with
\begin{equation}\label{eq:E}
	E(\zeta)\coloneqq\exp\int_0^\zeta\frac{D(\zeta')}{2}\,\dd\zeta'.
\end{equation}
We have
\begin{align*}
	\bar w_\zeta&=E^{-1}\left(\chi_\zeta-\frac D2\chi\right),\\
	\bar w_{\zeta\zeta}&=E^{-1}\left(\chi_{\zeta\zeta}-D\chi_\zeta+\left(\frac{D^2}{4}-\frac{D_\zeta}{2}\right)\chi\right).
\end{align*}
Inserting this in \eqref{eq:PDE_bar_w_step1} (multiplied by $E$) finally yields \eqref{eq:PDE_chi_Helmholtz}
with
\begin{align}\label{eq:F}
	F\coloneqq C-\frac{D^2}{4}-\frac{D_\zeta}{2}=C-\frac{D^2}{4}-\frac{A^{1/2}D'}{2}.
\end{align}
In order to find an explicit formula for $F$, we insert \eqref{eq:D} and
\[D'=-\frac{A'(B-\frac12A')}{2A^{3/2}}+\frac{B'-\frac12A''}{A^{1/2}}\]
into \eqref{eq:F} to get
\begin{align}
	F&=C-\frac{(B-\frac12A')^2}{4A}+\frac{A'(B-\frac12A')}{4A}-\frac{B'-\frac12A''}{2}\nonumber\\
	&=C-\frac{B^2}{4A}+\frac{BA'}{2A}-\frac{3A'^2}{16A}-\frac{B'}{2}+\frac{A''}{4}.\label{eq:F_2}
\end{align}
Now
\begin{align*}
	A'&=A^2(1-\mu)\left(\frac{\rho_0'}{p_0}u_0^2+g\frac{\rho_0^2}{p_0^2}u_0^2+2\frac{\rho_0}{p_0}u_0u_0'\right),\\
	A''&=\frac{2A'^2}{A}+A^2(1-\mu)\left(\frac{\rho_0''u_0^2}{p_0}+\frac{3g\rho_0\rho_0'u_0^2}{p_0^2}+\frac{4\rho_0'u_0u_0'}{p_0}+\frac{2g^2\rho_0^3u_0^2}{p_0^3}\right.\\
	&\qquad\qquad\qquad\qquad\qquad\left.{}+\frac{4g\rho_0^2u_0u_0'}{p_0^2}+\frac{2\rho_0u_0'^2}{p_0}+\frac{2\rho_0u_0u_0''}{p_0}\right)
\end{align*}
and
\begin{align*}
	B={}&A^2\left(\frac{\rho_0'}{\rho_0}+\frac{2(1-\mu)\rho_0u_0u_0'}{p_0}+\frac{g(1-\mu)\rho_0^2u_0^2}{p_0^2}\right),\\
	B'={}&\frac{2A'B}{A}+A^2\left[\frac{\rho_0''}{\rho_0}-\frac{\rho_0'^2}{\rho_0^2}+2(1-\mu)\left(\frac{\rho_0'u_0u_0'}{p_0}+\frac{\rho_0u_0'^2}{p_0}+\frac{\rho_0u_0u_0''}{p_0}\right.\right.\\
	&\qquad\qquad\quad\left.\left.{}+\frac{2g\rho_0^2u_0u_0'}{p_0^2}+\frac{g\rho_0\rho_0'u_0^2}{p_0^2}+\frac{g^2\rho_0^3u_0^2}{p_0^3}\right)\right].
\end{align*}
Plugging these formulas together with \eqref{eq:C} and \eqref{eq:A} into \eqref{eq:F_2}, we find after some computations
\begin{align*}
	A^{-3}F={}&- \frac{\rho_0' u_0'}{\rho_0 u_0} - \frac{u_0''}{u_0} - \frac{g \rho_0'}{\rho_0 u_0^2} + \frac{\rho_0'^2}{4 \rho_0^2} - \frac{\rho_0''}{2 \rho_0} - \frac{(1-\mu) \rho_0'^2 u_0^2}{p_0 \rho_0} + \frac{3 (1-\mu) \rho_0'' u_0^2}{4 p_0}\\
	& - \frac{(1-\mu) \rho_0' u_0 u_0'}{p_0} - \frac{5 (1-\mu) \rho_0 u_0'^2}{2 p_0} + \frac{3 (1-\mu) \rho_0 u_0 u_0''}{2 p_0} + \frac{ g (1-\mu) \rho_0'}{p_0}\\
	& - \frac{2 g (1-\mu) \rho_0 u_0'}{p_0 u_0} - \frac{g^2 (1-\mu) \rho_0}{p_0 u_0^2} + \frac{5 (1-\mu) ^2 \rho_0'^2 u_0^4}{16 p_0^2}\\
	& - \frac{(1-\mu) ^2 \rho_0 \rho_0'' u_0^4}{4 p_0^2} + \frac{(1-\mu) ^2 \rho_0 \rho_0' u_0^3 u_0'}{4 p_0^2} + \frac{3 (1-\mu) ^2 \rho_0^2 u_0^2 u_0'^2}{4 p_0^2}\\
	& - \frac{(1-\mu) ^2 \rho_0^2 u_0^3 u_0''}{2 p_0^2} - \frac{5 g (1-\mu) \rho_0 \rho_0' u_0^2}{4 p_0^2} - \frac{2 g \mu (1-\mu) \rho_0^2 u_0 u_0'}{p_0^2}\\
	& + \frac{g^2 (1-3\mu+2\mu^2) \rho_0^2}{p_0^2} + \frac{3 g (1-\mu) ^2 \rho_0^2 \rho_0' u_0^4}{8 p_0^3} + \frac{g (1-\mu) ^2 \rho_0^3 u_0^3 u_0'}{4 p_0^3}\\
	& + \frac{g^2 (1-\mu) \left(-\frac12 + \mu - \mu^2\right) \rho_0^3 u_0^2}{p_0^3} + \frac{g^2 (1-\mu) ^2 \rho_0^4 u_0^4}{16p_0^4}.
\end{align*}
Similarly, one could also derive an expression in terms of $(u_0,T_0)$, which we however omit. Above, note that everything (including derivatives $'$) in this formula is written with $z$ as the variable.

One might also be interested in a general explicit formula for $E$, which relates $\chi=E\bar w$ to $\bar w$. Writing out \eqref{eq:D}, we have
\[A^{-3/2}D=\frac{\rho_0'}{\rho_0}+(1-\mu)u_0\left(-\frac{\rho_0'u_0}{2p_0}+\frac{\rho_0u_0'}{p_0}+\frac{g\rho_0^2u_0}{2p_0^2}\right)\]
and thus
\begin{align*}
E&=\exp\int_0^z\frac{A^{-1/2}D}{2}\,\dd z'\\
&=\exp\int_0^z\frac{\frac{\rho_0'}{2\rho_0}+(1-\mu)u_0\left(-\frac{\rho_0'u_0}{4p_0}+\frac{\rho_0u_0'}{2p_0}+\frac{g\rho_0^2u_0}{4p_0^2}\right)}{1-(1-\mu)\frac{\rho_0}{p_0}u_0^2}\,\dd z',
\end{align*}
again written in terms of $z$.

We emphasize that the derivation of \eqref{eq:PDE_bar_w_original} subject to \eqref{eq:ABC} has not used any simplifying assumptions. A typical form of the Scorer parameter $F$ is depicted in Fig.~\ref{fig:Scorer_parameter}. In the meteorological research literature (see the surveys \cite{Smith79, Smith19})  some terms are neglected in order to simplify the expressions further. The most frequently used approximations are:
\begin{itemize}
	\item The first approximation is $\rho_0/p_0\approx0$, on the basis that $\rho_0/p_0=1/T_0$ with non-dimensional order of magnitude $U'^2/({\mathfrak R}'T') \approx 3 \times 10^{-4}$, given the already specified 
	reference values of $U'$ and ${\mathfrak R}'$ and the fact that the 
	minimal temperature of the troposphere varies by location and season, but is typically in the range from $-50\,^\circ$C to $-80\,^\circ$C (corresponding to $223.15\,^\circ$K to $193.15\,^\circ$K). Then
	\begin{gather*}
		A\approx1,\quad\zeta\approx z,\quad B\approx\frac{\rho_0'}{\rho_0},\\
		C\approx-\frac{\rho_0'u_0'}{\rho_0u_0}-\frac{u_0''}{u_0}-\frac{g\rho_0'}{\rho_0u_0^2}\approx-\frac{\rho_0'u_0'}{\rho_0u_0}-\frac{u_0''}{u_0}+\frac{g\beta}{u_0^2},\\
		E\approx\left(\frac{\rho_0}{\rho_0(0)}\right)^{1/2},\;
		F\approx-\frac{\rho_0'u_0'}{\rho_0u_0}-\frac{u_0''}{u_0}+\frac{g\beta}{u_0^2}-\frac{\rho_0'^2}{4\rho_0^2}-\frac12\left(\frac{\rho_0'}{\rho_0}\right)',
	\end{gather*}
	where the latter is the classical form of the Scorer parameter $F$. 
	\item A second, additional approximation is Boussinesq, that is, $\rho_0'\approx0$ in all terms without $g$. Here, one has
	\[A\approx B\approx1,\quad\zeta\approx z,\quad C\approx F\approx-\frac{u_0''}{u_0}+\frac{g\beta}{u_0^2},\]
	where the latter is another form of the Scorer parameter often found in the meteorological research literature \cite{Smith79}. Note that in the Boussinesq approximation the terms involving density variations in the flow are considered to be much smaller than the inertial terms and are ignored everywhere except in the buoyancy. However, in cases of realistic atmospheric multi-layered flows, the density gradients separating the homogeneous layers can be large and such an approximation is not reasonable 
	(see \cite{ba, w}). Moreover, the comprehensive study \cite{gg} shows that the Boussinesq regime applies only if the atmospheric temperature variations do not exceed $29\,^\circ$C. The troposphere is heated from below by the Earth's infrared radiation and convective processes are responsible for the decrease of temperature with altitude, at an average rate of about $0.65\,^\circ$C per 100 m. Near the tropopause this decrease 
	reaches $50\,^\circ$C at mid-latitudes and often exceeds $80\,^\circ$C in the tropics. Note that the centripetal acceleration due to the Earth's rotation about its polar axis causes 
	the top of the troposphere to drop poleward, from about 18 km at the Equator to about 6 km at the poles (see \cite{cj, va}). At mid-latitudes 
	the tropopause level also fluctuates quite strongly in time, being located as low as 10 km one day and as high as 15 km a week later (see \cite{bi, ho}).
\end{itemize}

\begin{figure}
	\includegraphics{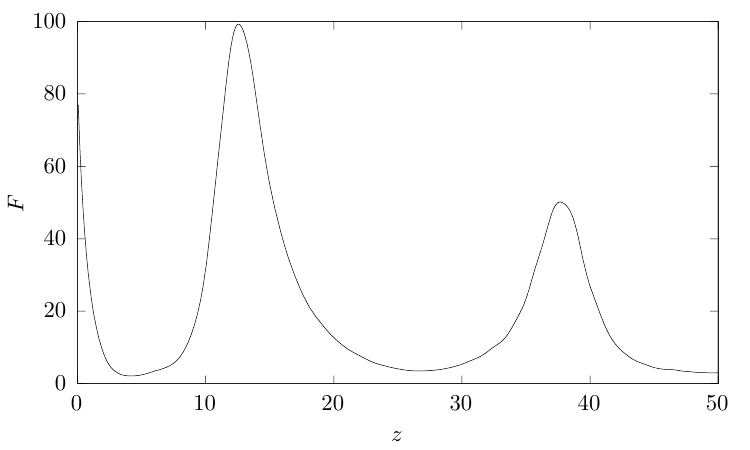}
	\caption{The annual average of the Scorer parameter $F$ at latitude $45\,^\circ$N, with $u_0$ the zonal wind component. Computed with data from CIRA-86 \cite{CIRA86_2,CIRA86}. Here, $A=1$ up to an error of $10^{-2}$, so that $z=\zeta$, with $z=50$ corresponding to an altitude of $50 L' = 100\;\mathrm{km}$.}
	\label{fig:Scorer_parameter}
\end{figure}

To conclude, the linear problem \eqref{eq:system_linear} reduces to \eqref{eq:PDE_chi_Helmholtz} with boundary condition \eqref{eq:kinematic_BC_linear}, that is,
\begin{subequations}
	\begin{align}
		\chi_{xx}+\chi_{\zeta\zeta}+F\chi &= 0 & \text{in }\R^2_+,\\
		\chi &= \bar h_x u_0 & \text{on }\partial\R^2_+,
	\end{align}
\end{subequations}
recalling that $E(0)=1$ by \eqref{eq:E}.

\section{Formal derivation of the solution formula}\label{sec:derivation}
\subsection{Some physical considerations}
For the ease of notation, let us return to $(x,z)$ as the variables on the upper half-plane, rename $\chi$ as $w$ (not to confuse with the $w$ in \eqref{eq:system_nonlinear}), and denote $f\coloneqq\bar h_x u_0(0)$. Thus, we shall now study mathematically the problem
\begin{subequations}\label{eq:BVP}
	\begin{align}
		\Delta w+Fw &= 0 & \text{in }\R^2_+,\label{eq:BVP_PDE}\\
		w &= f & \text{on }\partial\R^2_+,\label{eq:BVP_BC}
	\end{align}
\end{subequations}
for given $f$. In reality, the Scorer parameter $F$ is typically positive, as is also illustrated in Fig.~\ref{fig:Scorer_parameter}. This is indeed the interesting case which supports waves, while a negative $F$ would lead to unique, in all directions exponentially decaying solutions of \eqref{eq:BVP} (assuming decaying $f$). In fact, for this only the behavior of $F$ near infinity is relevant. Therefore and for mathematical purposes later on, in particular to put hands on the Dirichlet spectrum of the associated Schrödinger operator on the half line (see Section~\ref{sec:rigorous}), we shall always assume
\begin{align}
	\text{there exists }F_0 > 0 \text{ such that }& F-F_0, F' \in L^1([0,\infty))\nonumber\\
	\text{and }&\int_0^\infty z|F(z)-F_0|\,\dd z<\infty.\tag{A}\label{ass:A}
\end{align}
In physical terms, \eqref{ass:A} entails that $u_0$ has a fixed (positive) sign, that is, the background flow does not change its direction (at such points $F$ would blow up, containing terms with $u_0$ in the denominator). Moreover, \eqref{ass:A} implies that at large altitudes \eqref{eq:BVP_PDE} behaves like a usual Helmholtz equation. In fact, it is reasonable to assume \eqref{ass:A} in view of Fig.~\ref{fig:Scorer_parameter} and because at even higher altitudes the Horizontal Wind Model \cite{HWM14} suggests nearly constant $u_0$, while typically $\rho_0$ and $p_0$ decay with the same exponential rate. Thus, $F\approx-\frac{u_0''}{u_0}+\frac{g\beta}{u_0^2}$ is roughly constant at high altitudes.

In the context of the Helmholtz equation on the upper half-plane, it is well-known that there is an inherent non-uniqueness in the corresponding boundary value problem, due to radiating waves. This means that some physical consideration has to single out the correct physical solution among all mathematically reasonable ones. The mathematical literature in this direction, however, is restricted to the case of electromagnetic or acoustic waves. Here, various radiation conditions have been established (see \cite{Schot92} for an overview and \cite{BonnetEtAl09,CiraoloMagnanini09,Xu95} and the references therein for more recent results), all of them ultimately building up on Sommerfeld's radiation condition \cite{Sommerfeld1912}. The physical intuition behind this is that energy must radiate from sources to infinity and not vice versa. Now, in the electromagnetic or acoustic context, waves are emitted radially from a source. This leads ultimately to the fact that a Green's function associated to \eqref{eq:BVP} will be symmetric in $x$.

However, this can clearly not be correct for mountain waves -- it is obvious that waves should be found downstream and not upstream. This natural asymmetry, conversely, has to be taken care of when selecting the physically relevant solution of \eqref{eq:BVP} in the context of mountain waves and formulating a radiation condition. In fact, how to do this, even in the simplest case of constant $F$ and explicit, \enquote{nice} $f$, sparked some controversy in the meteorological research literature between the 1950s and 1970s; see the discussion in \cite{Smith79}. There are now some accepted physical arguments \cite{Corby54,EliassenPalm54,EliassenPalm60,Queney47}, and for the purpose of illustration especially the one by Lyra \cite{Lyra43} is relevant for us.

In the context of constant $F$, Lyra explained physically that a correct Green's function should
\begin{enumerate}[label=(L\arabic*),ref=(L\arabic*),leftmargin=*]
	\item \emph{ensure a greatest possible cancellation of waves windward and a largest possible reinforcement of waves leeward},\label{cond:L1}
	\item \emph{be strictly monotone in $x$ windward, for fixed $z$, ensuring the same property for $w$ as well}.\label{cond:L2}
\end{enumerate}
We shall now construct the physically relevant solution to \eqref{eq:BVP} according to these principles.

\subsection{The transform method}
Here, the strategy is to use a transform method based on the underlying Schrödinger operator
\[L\coloneqq -\frac{\dd^2}{\dd z^2} + q,\qquad \text{where } q\coloneqq F_0-F,\]
on the half line. This method is based on classical Weyl--Titchmarsh theory \cite{CoddingtinLevinson,Titchmarsh46} and has been somewhat similarly used in the context of wave propagation in stratified media \cite{BonnetEtAl09,DermenjianGuillot,MagnaniniSantosa01,Weder91,Wilcox84}. This section is only concerned with the formal computations; we postpone the rigorous analysis to Section~\ref{sec:rigorous}. In particular, we shall always tacitly assume that all quantities are sufficiently nice in order to justify all computations.

For $\lambda\in\R$, $v(\cdot,\lambda)$ is defined to be the solution of
\begin{subequations}\label{eq:def_v}
	\begin{gather}
		Lv = \lambda v,\quad \text{that is,}\quad v''+(\lambda+F-F_0)v=0\quad\text{on }[0,\infty),\label{eq:def_v_ODE}\\
		v(0)=0,\quad v'(0)=1,\label{eq:def_v_IC}
	\end{gather}
\end{subequations}
For $q\in L^1([0,\infty))$, it is well-known that $L$ is of limit-point type at infinity and its Dirichlet spectrum is comprised by the continuous part $[0,\infty)$ (with no embedded eigenvalues) and potentially a discrete part below $0$. Now let $\eta$ be the associated spectral measure. By classical theory, $\eta$ has mass on at most $[F_0-F_*,\infty)$, where
\[F_*\coloneqq \sup_{[0,\infty)} F<\infty,\]
and is absolutely continuous with respect to the Lebesgue measure on $[0,\infty)$; we write
\begin{equation}\label{eq:measure_continuous}
	\dd\eta(\lambda) = \sigma(\lambda)\, \dd\lambda, \quad \lambda\ge 0.
\end{equation}
Now, introduce
\begin{equation}\label{eq:def_U}
	U(x,\lambda)\coloneqq\int_0^\infty w(x,z)v(z,\lambda)\,\dd z.
\end{equation}
Multiplying \eqref{eq:BVP_PDE} by $v(z,\lambda)$, integrating in $z$, recalling \eqref{eq:def_v} and using the boundary condition \eqref{eq:BVP_BC} yields
\begin{align*}
	0&=\int_0^\infty(\Delta w(x,z)+F(z)w(x,z))v(z,\lambda)\,\dd z\\
	&=\int_0^\infty(w_{xx}(x,z)v(z,\lambda)+(v''(z,\lambda)+F(z)v(z,\lambda))w(x,z))\,\dd z\\
	&\quad\;+w(x,0)v'(0,\lambda)\\
	&=U_{xx}(x,\lambda)+(F_0-\lambda)U(x,\lambda)+f(x).
\end{align*}
That is, $U=U(\cdot,\lambda)$ solves the ordinary differential equation
\begin{equation}\label{eq:ODE_U}
	U_{xx}+(F_0-\lambda)U=-f\quad\text{in }\R.
\end{equation}
We now have to distinguish the two cases $\lambda>F_0$ and $\lambda<F_0$ (the single value $\lambda=F_0$ being irrelevant due to the absence of embedded eigenvalues). This is precisely the dichotomy between waves decaying at infinity and waves radiating energy to infinity. More precisely, for $\lambda>F_0$, the only bounded solution of \eqref{eq:ODE_U} is
\begin{equation}\label{eq:formula_U_evenescent}
	U(x,\lambda)=\int_\R\frac{e^{-\sqrt{\lambda-F_0}|x-x'|}}{2\sqrt{\lambda-F_0}}f(x')\,\dd x',
\end{equation}
so that we do not have any other choice in this case. For $\lambda<F_0$ the solutions of \eqref{eq:ODE_U} are oscillatory, and we have to find the physically correct solution. This is exactly the point where a careful physical argument or radiation condition comes into play. Here, in the context of electromagnetic or acoustic waves, classically the radial, \enquote{outgoing} kernel
\[\frac{e^{i\sqrt{F_0-\lambda}|\cdot|}}{2i\sqrt{F_0-\lambda}}\]
is the correct choice. As we have already observed, this choice cannot be correct in the context of mountain waves. Instead, we recall the principle \ref{cond:L1}. In fact, there is precisely one kernel for \eqref{eq:ODE_U} which vanishes identically upstream and moves all mass downstream, and therefore guarantees \ref{cond:L1}; this kernel is
\[\frac{\sin(\sqrt{F_0-\lambda}\;\cdot)1_{(0,\infty)}}{\sqrt{F_0-\lambda}},\]
with $1_{(0,\infty)}$ being the characteristic function of $(0,\infty)$. In other words, we are led to select
\[U(x,\lambda)=-\int_{-\infty}^x\frac{\sin(\sqrt{F_0-\lambda}(x-x'))}{\sqrt{F_0-\lambda}}f(x')\,\dd x'\]
as the physically correct solution of \eqref{eq:ODE_U}. Here, we see that these $U$ vanish below the support of $f$, that is, windward before the onset of the mountain range. As a consequence, for large negative $x$, only the decaying solutions in \eqref{eq:formula_U_evenescent} will contribute to $w$, and the monotonicity of the kernel there forces windward monotonicity of $w$ as well. This is basically the mechanism which leads to the principle \ref{cond:L2}. We shall return to this and make this more precise in Section~\ref{sec:rigorous}.

Having found now all $U$'s, we can reconstruct $w$. This is done by applying the inverse transform \cite{Titchmarsh46} to \eqref{eq:def_U}, that is,
\begin{align}
	w(x,z) &= \int_\R U(x,\lambda)v(z,\lambda)\,\dd\eta(\lambda)\nonumber\\
	&=\int_{F_0}^\infty \int_\R\frac{e^{-\sqrt{\lambda-F_0}|x-x'|}}{2\sqrt{\lambda-F_0}}f(x')\,\dd x'\,v(z,\lambda)\,\dd\eta(\lambda)\nonumber\\
	&\quad\; -\int_{F_0-F_*}^{F_0}\int_{-\infty}^x\frac{\sin(\sqrt{F_0-\lambda}(x-x'))}{\sqrt{F_0-\lambda}}f(x')\,\dd x'\,v(z,\lambda)\,\dd\eta(\lambda)\nonumber\\
	&=(K(\cdot,z)\ast f)(x),\label{eq:formula_w}
\end{align}
where the kernel $K$ is
\begin{align*}
K(x,z) &\coloneqq \int_{F_0}^\infty\frac{e^{-\sqrt{\lambda-F_0}|x|}}{2\sqrt{\lambda-F_0}}v(z,\lambda)\,\dd\eta(\lambda)\\
&\quad\;-1_{x>0}\int_{F_0-F_*}^{F_0}\frac{\sin(\sqrt{F_0-\lambda}x)}{\sqrt{F_0-\lambda}}v(z,\lambda)\,\dd\eta(\lambda).
\end{align*}
This is already the desired solution formula. To illustrate the kernel physically in more detail, we split the second part above into negative and positive $\lambda$, recall \eqref{eq:measure_continuous}, and arrive at three pieces which contribute to
\begin{equation}\label{eq:K_split}
	K = K^{\mathrm{e}} + K^{\mathrm{r}} + K^{\mathrm{t}}.
\end{equation}
\begin{itemize}
	\item \emph{The evanescent piece.} In case $\lambda > F_0$, the modes decay exponentially in $|x|$ and are bounded and oscillatory in $z$. We have
	\begin{equation}\label{eq:K^e}
		K^{\mathrm{e}}(x,z) = \int_{F_0}^\infty\frac{e^{-\sqrt{\lambda-F_0}|x|}}{2\sqrt{\lambda-F_0}}v(z,\lambda)\sigma(\lambda)\,\dd\lambda.
	\end{equation}
	\item \emph{The radiated piece.} In case $\lambda \in (0,F_0)$, the modes are bounded and oscillatory in both $x$ and $z$. This piece corresponds precisely to what is known as \emph{vertically propagating waves} \cite{Smith79}. We have
	\begin{equation}\label{eq:K^r}
		K^{\mathrm{r}}(x,z) = -1_{x>0}\int_0^{F_0}\frac{\sin(\sqrt{F_0-\lambda}x)}{\sqrt{F_0-\lambda}}v(z,\lambda)\sigma(\lambda)\,\dd\lambda.
	\end{equation}
	\item \emph{The trapped piece.} This comprises the potential Dirichlet eigenvalues $\lambda\in [F_0-F_*,0)$ of $L$, with modes that are bounded and oscillatory in $x$ and decaying in $z$ (in the sense of membership in $L^2$ with respect to $z$). This piece corresponds precisely to what is known as \emph{trapped lee waves} \cite{Smith79}. Denoting by $n\ge0$ the number of such eigenvalues, which we label as $\lambda_j$, $j=1,\ldots,n$, we have
	\begin{equation}\label{eq:K^t}
		K^{\mathrm{t}}(x,z) = - 1_{x>0}\sum_{j=1}^n \frac{\sin(\sqrt{F_0-\lambda_j}x)}{\sqrt{F_0-\lambda_j}}\frac{v(z,\lambda_j)}{\|v(\cdot,\lambda_j)\|_{L^2([0,\infty))}^2}.
	\end{equation}
\end{itemize}
We identify the extent of horizontal propagation as a key feature that distinguishes vertically propagating waves from trapped lee waves: On the level of $K$, the amplitude of vertically propagating waves tends to zero downstream by the Riemann--Lebesgue lemma, while the amplitude of trapped lee waves remains constant at fixed altitude.

Since trapped lee waves are of special physical interest, let us provide (rigorous) lower and upper bounds for their number. These are merely rephrasings of the classical results on bound states in a central potential due to Bargmann \cite{Bargmann52} and Calogero \cite{Calogero56}. (Note that 3D bound states $\tilde v$ of $-\Delta + q(|\cdot|)$ are in one-to-one correspondence to 1D Dirichlet eigenstates $v$ of $L$ via $\tilde v(Z) = z^{-1} v(z)$, $z=|Z|$, $Z\in\R^3$.) There are also other, more complicated such bounds, which also apply in our setting; see, for example, \cite[Thm. XIII.9]{ReedSimonIV}.
\begin{proposition}\label{prop:trapped}
	Let $n\ge0$ as in \eqref{eq:K^t}. Then we have:
	\begin{enumerate}[label=(\alph*)]
		\item (Upper bound on $n$.)
		\[n \le \int_0^\infty z|F(z)-F_0|\,\dd z.\]
		\item (Lower bound on $n$.)
		\[n \ge \left\lfloor \frac12 + \int_0^\infty \frac{F(z)-F_0}{\pi M}\,\dd z \right\rfloor,\; \text{with }M>0,\, M\ge\sqrt{F_*-F_0},\]
		where $\lfloor\cdot\rfloor$ is the floor function.
	\end{enumerate}
\end{proposition}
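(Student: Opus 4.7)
The plan is to exploit the explicit correspondence with the radial three-dimensional Schrödinger operator already pointed out immediately before the statement: via the identification $\tilde v(Z) = z^{-1} v(z)$ with $z = |Z|$, the negative Dirichlet eigenvalues of $L = -\dd^2/\dd z^2 + q$ with $q = F_0 - F$ are in bijection with the $s$-wave ($\ell = 0$) bound states of $-\Delta + q(|\cdot|)$ on $L^2(\R^3)$. Assumption \eqref{ass:A} guarantees $q \in L^1([0,\infty))$ together with $\int_0^\infty z|q(z)|\,\dd z < \infty$, which is precisely the short-range hypothesis under which the classical Bargmann and Calogero bounds on the number of $s$-wave bound states apply. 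Both parts of the proposition then reduce to these bounds, combined with elementary inequalities to match the form in which they are stated.

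For part (a), I would invoke Bargmann's classical inequality \cite{Bargmann52} in the form
\[
N_0 \le \int_0^\infty r\, q^-(r)\, \dd r,
\]
with $q^-(r) = \max\{-q(r), 0\} = (F(r) - F_0)_+$. Since $q^- \le |q|$ pointwise, this immediately yields $n \le \int_0^\infty z|F(z) - F_0|\,\dd z$. No real difficulty arises here once the three-dimensional reduction is set up and the short-range integrability has been checked against \eqref{ass:A}.

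For part (b), I would apply a Calogero-type lower bound \cite{Calogero56} on the number of $s$-wave bound states in the form
\[
n \ge \left\lfloor \tfrac12 + \tfrac{1}{\pi}\int_0^\infty \sqrt{(F(z) - F_0)_+}\,\dd z \right\rfloor,
\]
and then reduce the square root to the stated linear integrand via the elementary inequality $\sqrt{a} \ge a/M$ valid for $0 \le a \le M^2$. Since $M \ge \sqrt{F_* - F_0}$, we have $(F-F_0)_+ \le M^2$ almost everywhere, so $\sqrt{(F-F_0)_+} \ge (F-F_0)_+/M \ge (F-F_0)/M$; monotonicity of the floor function then produces the claimed estimate. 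The main obstacle I expect here is that Calogero's classical statement usually comes with a monotonicity assumption on the potential, which we do not have. I would handle this either by passing to a monotonicity-free variant of Calogero's bound proved by a direct variational/trial-function argument on the quadratic form of $L$, or by a Prüfer-variable analysis of \eqref{eq:def_v} at $\lambda = 0$ combined with the Sturm oscillation theorem, translating the nodal count of $v(\cdot,0)$ into the number of Dirichlet eigenvalues of $L$ below $0$.
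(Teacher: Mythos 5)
Your overall strategy is exactly the paper's: the proposition is proved there simply by invoking the one-to-one correspondence between Dirichlet eigenstates of $L$ on the half line and $s$-wave bound states of $-\Delta+q(|\cdot|)$ on $\R^3$, and then quoting Bargmann \cite{Bargmann52} for (a) and Calogero \cite{Calogero56} for (b); your part (a), including the observation that $(F-F_0)_+\le|F-F_0|$ and the check of the short-range hypotheses against \eqref{ass:A}, matches this verbatim. The one place where you diverge is part (b): you route the argument through a square-root lower bound $n\ge\lfloor\tfrac12+\tfrac1\pi\int_0^\infty\sqrt{(F-F_0)_+}\,\dd z\rfloor$ and then linearize via $\sqrt a\ge a/M$. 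This detour is both unnecessary and shaky as a citation: Calogero's square-root estimates are the ones that require monotonicity of $|q|$ (and the well-known one in that family is an \emph{upper} bound), whereas his \emph{lower} limit is already stated in precisely the linear form of the proposition, $n\ge\lfloor\tfrac12+\tfrac{1}{\pi M}\int_0^\infty|q|\,\dd z\rfloor$ with $M^2\ge\sup|q|$, with no monotonicity hypothesis, so it can be quoted directly (weakening $|q|=(F-F_0)_+$ to $F-F_0$ in the integrand only decreases the right-hand side). Your proposed fallback -- a Prüfer-phase analysis of $v(\cdot,0)$ combined with Sturm oscillation -- is in fact the correct self-contained proof of exactly that linear bound: writing $v=\rho\sin\theta$, $v'=M\rho\cos\theta$ gives $\theta'=M\cos^2\theta-\tfrac{q}{M}\sin^2\theta\ge-\tfrac{q}{M}$ whenever $-q\le M^2$, and counting upward crossings of multiples of $\pi$ yields the claim; so if you want a proof rather than a citation, promote that fallback to the main argument and drop the square-root intermediary.
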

\begin{remark}
	\begin{itemize}
		\item When convolving $K(\cdot,z)$ with $f$, some of these $n$ \enquote{potential} trapped lee waves might however get canceled out. They survive precisely when
		\[\int_{-\infty}^x \sin(\sqrt{F_0-\lambda_j}(x-x')) f(x')\, \dd x' \ne 0 \quad \text{leeward},\]
		which means, at least in the case of compactly supported $f$, that
		\[\hat f(\sqrt{F_0-\lambda_j}) \ne 0,\]
		where $\hat f$ is the usual Fourier transform of $f$.
		\item Let us emphasize the following interesting connection to the meteorological research literature: Assuming $F\ge F_0$, $F\not\equiv F_0$, and replacing $F$ by $aF$, $a>0$ (or by $aF-b$, $b\in[0,aF_0)$), and $F_0$, $F_*$ accordingly, the integral appearing in the lower bound above is strictly positive and scales, for optimal $M$, like $\sqrt a$. Thus, for $a$ large enough, there always exist (arbitrarily many) trapped lee waves (if they do not get canceled out in the sense above). This can be viewed as a mathematically precise statement of Scorer's condition on the existence of trapped lee waves: that the Scorer parameter $F$ decreases (to $F_0$) rapidly with height \cite{Scorer49}.
	\end{itemize}
\end{remark}

\section{Examples}\label{sec:examples}
Before we turn to a rigorous analysis of the derived solution formula \eqref{eq:formula_w}, we first provide instructive examples for $F$ and corresponding $K$. To this end, let us recall the classical formula for the spectral function \cite{Titchmarsh46}
\begin{equation}\label{eq:formula_sigma}
	\sigma(\lambda) = \lim_{z\to\infty}\frac{1}{\pi(v'(z,\lambda)^2/\sqrt\lambda+\sqrt\lambda v(z,\lambda)^2)},\quad \lambda>0.
\end{equation}

\subsection{\texorpdfstring{$\mathbf{F\equiv0}$}{F = 0}} 
Despite not supporting waves and therefore not being of uttermost importance in applications (and also violating \eqref{ass:A}, strictly speaking), we illustrate the transform method in the case of vanishing Scorer parameter, that is, in the case of the Laplace equation. In fact, we will see in Section~\ref{sec:rigorous} that the kernel here is always the singular part of the kernel for general $F$.

In this setting
\begin{equation}\label{eq:formula_v_F=0}
	v(z,\lambda) = \frac{\sin(\sqrt\lambda z)}{\sqrt\lambda}
\end{equation}
and thus
\begin{equation}\label{eq:formula_sigma_F=0}
	\sigma(\lambda) = \frac{\sqrt\lambda}{\pi}
\end{equation}
by \eqref{eq:def_v} and \eqref{eq:formula_sigma}. It is clear that $K=K_0$ is only made up of the evanescent piece, and we have
\begin{align}
	K_0(x,z) &= \int_0^\infty \frac{e^{-\sqrt\lambda|x|}}{2\sqrt\lambda}\frac{\sin(\sqrt\lambda z)}{\sqrt\lambda}\frac{\sqrt\lambda}{\pi}\,\dd\lambda = \frac{1}{\pi} \int_0^\infty e^{-\mu|x|}\sin(\mu z)\,\dd \mu \nonumber\\
	&= \frac{z}{\pi(x^2+z^2)}.\label{eq:K0}
\end{align}
Of course, $K_0$ is nothing else than the usual Poisson kernel on the upper half-plane.

\subsection{\texorpdfstring{$\mathbf{F=\text{constant}}$}{F = constant}}\label{sec:F=const}
A case often studied in the meteorological research literature, due to computational convenience, is the case of constant Scorer parameter $F>0$. Here, the kernel still does not have a trapped piece, but has now a radiated piece. Moreover, the formulas \eqref{eq:formula_v_F=0} and \eqref{eq:formula_sigma_F=0} are still valid. Thus,
\begin{align*}
	K(x,z) &= \frac{1}{2\pi} \int_F^\infty\frac{e^{-\sqrt{\lambda-F}|x|}}{\sqrt{\lambda-F}}\sin(\sqrt\lambda z)\,\dd\lambda\\
	&\quad\; - \frac{1_{x>0}}{\pi} \int_0^F\frac{\sin(\sqrt{F-\lambda}x)}{\sqrt{F-\lambda}}\sin(\sqrt\lambda z)\,\dd\lambda\\
	&= \frac1\pi \int_{\sqrt F}^\infty \frac{e^{-\sqrt{\gamma^2-F}|x|}}{\sqrt{\gamma^2-F}} \gamma \sin(\gamma z)\, \dd\gamma\\
	&\quad\; - \frac{2\cdot 1_{x>0}}{\pi} \int_0^{\sqrt F} \frac{\sin(\sqrt{F-\gamma^2}x)}{\sqrt{F-\gamma^2}} \gamma \sin(\gamma z)\, \dd\gamma\\
	&= \frac1\pi \int_0^\infty e^{-\mu|x|} \sin(\sqrt{\mu^2+F}z)\,\dd\mu\\
	&\quad\; - \frac{2\cdot 1_{x>0}}{\pi}\int_0^{\sqrt F} \sin(\mu x) \sin(\sqrt{F-\mu^2}z) \,\dd\mu.
\end{align*}
The second line is exactly the formula which Lyra \cite[Sec. II.8]{Lyra43} derived (in Lyra's notation, $F=k^2$, $K=2\frac{\partial F_1}{\partial z}$). We plot $K$ in Fig.~\ref{fig:KernelConstant}.

\begin{figure}
	\includegraphics{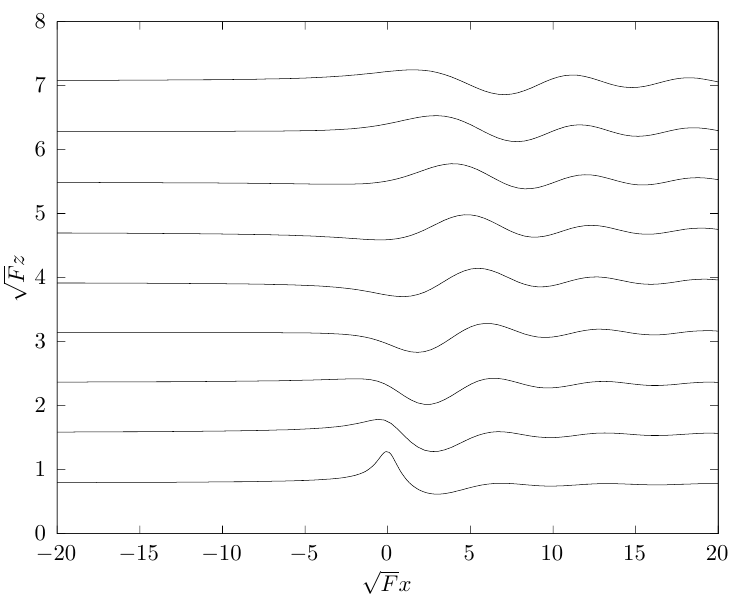}
	\caption{The kernel $K$ for constant $F$, plotted at various equidistant altitudes. Vertically propagating waves are visible, which are tilted backwards with height. The wave energy decreases rapidly downstream, and the wave pattern is essentially confined within a small region horizontally. See also \cite[Fig. 4]{Lyra43} for a similar illustration.}
	\label{fig:KernelConstant}
\end{figure}

An important aspect to notice here as well is the behavior of $K$ as $|x|\to\infty$, for fixed $z$. Notice that, integrating by parts,
\begin{align*}
	&\int_0^\infty e^{-\mu|x|}\sin(\sqrt{\mu^2+F}z)\,\dd\mu\\
	&=\frac{\sin(\sqrt{F} z)}{|x|}+\frac{z}{|x|}\int_0^\infty e^{-\mu|x|}\frac{\mu}{\sqrt{\mu^2+F}}\cos(\sqrt{\mu^2+F}z)\,\dd\mu\\
	&=\frac{\sin(\sqrt{F} z)}{|x|}+\mathcal O(|x|^{-3})
\end{align*}
and
\begin{align*}
	&\int_0^{\sqrt F}\sin(\mu x)\sin(\sqrt{F-\mu^2}z)\,\dd\mu\\
	&=\frac{\sin(\sqrt{F} z)}{x}-\frac{z}{x}\int_0^{\sqrt F}\cos(\mu x)\frac{\mu}{\sqrt{F-\mu^2}}\cos(\sqrt{F-\mu^2}z)\,\dd\mu\\
	&=\frac{\sin(\sqrt{F} z)}{x}+o(|x|^{-1}).
\end{align*}
In particular,
\[K(x,z)=-\frac{\sin(\sqrt{F} z)}{\pi x}+o(|x|^{-1}),\qquad |x|\to\infty,\]
and therefore
\[K(\cdot,z) \notin L^1(\R)\]
for all but countably many $z$. This is fundamentally different to the case of electromagnetic or acoustic waves, since there, due to the different formula for the radiated piece, the terms at order $|x|^{-1}$ exactly cancel out.

Turning to the principle \ref{cond:L2}, a similar sequence of integrations by parts shows that, windward,
\begin{align}
	K_x(x,z) &= \frac1\pi \int_0^\infty \mu e^{\mu x} \sin(\sqrt{\mu^2+F}z) \, \dd\mu \nonumber\\
	&= \frac{\sin(\sqrt{F} z)}{\pi x^2} + \frac{3z\cos(\sqrt{F}z)}{\pi\sqrt{F}x^4} + \mathcal O(|x|^{-6}),\quad x\to-\infty,\label{eq:Kx_asymptotics_F=const}
\end{align}
validating \ref{cond:L2}. We shall generalize this computation in Section~\ref{sec:rigorous}.

\subsection{\texorpdfstring{$\mathbf{F_0-F}=$ Morse potential}{F0 - F = Morse potential}}
Let us now have a look at an explicit example which allows for trapped lee waves and where $v$ and $\sigma$ can be computed explicitly in terms of confluent hypergeometric functions. (We are not aware of any such explicit example in the mountain wave literature.) We consider the well-known Morse potential \cite{Morse1929}
\begin{equation}\label{eq:Morse_potential}
	q(z) = Q\left(e^{-2a(z-z_0)} - 2e^{-a(z-z_0)}\right),
\end{equation}
with $Q>0$, $a>0$, $z_0\in\R$.

Let us first consider $\lambda>0$. By the substitution
\begin{equation}\label{eq:Kummer_substitution}
	v(z,\lambda) = \left(\frac{s}{2}\right)^{\mp i\sqrt{\lambda}/a} e^{-s/2} u^\pm(s,\lambda), \quad s = \frac{2\sqrt Q}{a} e^{-a(z-z_0)},
\end{equation}
\eqref{eq:def_v_ODE} is transformed into the Kummer equation
\[s\frac{\dd^2 u^\pm}{\dd s^2} + \left(\mp\frac{2i\sqrt{\lambda}}{a} + 1 - s\right) \frac{\dd u^\pm}{\dd s} - \left(\mp\frac{i\sqrt{\lambda}}{a} + \frac12 - \frac{\sqrt Q}{a}\right) u^\pm = 0.\]
Thus, we can write
\[v(z,\lambda) = \sum_\pm c^\pm \left(\frac{s}{2}\right)^{\mp i\sqrt{\lambda}/a} e^{-s/2} M\left(\mp\frac{i\sqrt{\lambda}}{a} + \frac12 - \frac{\sqrt Q}{a}, \mp\frac{2i\sqrt{\lambda}}{a} + 1, s\right)\]
for some $c^\pm\in\C$, where $M={}_1F_1$ is the Kummer function of the first kind. Now, \eqref{eq:def_v_IC} dictates the choice of $c^\pm$ by
\begin{align*}
	&\sum_\pm c^\pm \left(\frac{\sqrt Q}{a}\right)^{\mp i\sqrt\lambda/a} e^{\mp i\sqrt\lambda z_0} M\left(\mp\frac{i\sqrt{\lambda}}{a} + \frac12 - \frac{\sqrt Q}{a}, \mp\frac{2i\sqrt{\lambda}}{a} + 1, \frac{2\sqrt Q e^{az_0}}{a}\right)\\
	&= 0,\\
	&\sum_\pm c^\pm \Bigg[ \mp \frac{i\sqrt\lambda}{2\sqrt Q} \left(\frac{\sqrt Q}{a}\right)^{\mp i\sqrt\lambda/a} e^{(\mp i\sqrt\lambda-a) z_0}\\
	&\qquad\qquad\qquad\qquad \cdot M\left(\mp\frac{i\sqrt{\lambda}}{a} + \frac12 - \frac{\sqrt Q}{a}, \mp\frac{2i\sqrt{\lambda}}{a} + 1, \frac{2\sqrt Q e^{az_0}}{a}\right)\\
	&\quad\;+ \left(\frac{\sqrt Q}{a}\right)^{\mp i\sqrt\lambda/a} e^{\mp i\sqrt\lambda z_0}\left(\frac12-\frac{\sqrt Q}{a\mp 2i\sqrt\lambda}\right)\\
	&\qquad\qquad\qquad\qquad \cdot M\left(\mp\frac{i\sqrt{\lambda}}{a} + \frac32 - \frac{\sqrt Q}{a}, \mp\frac{2i\sqrt{\lambda}}{a} + 2, \frac{2\sqrt Q e^{az_0}}{a}\right)\Bigg]\\
	&= -\frac{\exp\left(\frac{\sqrt Q}{a}e^{az_0}-az_0\right)}{2\sqrt Q},
\end{align*}
where we used $M_s(a,b,s)=aM(a+1,b+1,s)/b$. This then yields an explicit solution formula for $v(z,\lambda)$ via \eqref{eq:Kummer_substitution}, which we however omit for simplicity.

Turning to $\sigma(\lambda)$, we shall use the Kodaira formula \cite[p. 940]{Kodaira49}
\begin{equation}\label{eq:Kodaira-}
	\sigma(\lambda)=\frac{\sqrt\lambda}{\pi|v_-(0,\lambda)|^2},
\end{equation}
where the Jost solution $v_-$ solves \eqref{eq:def_v_ODE} and behaves like $e^{-i\sqrt\lambda z}$ as $z\to\infty$. Since $s\to 0$ as $z\to\infty$ and $M(\cdot,\cdot,0)=1$ this translates to
\[c^+=0,\quad c^-=\left(\frac{\sqrt Q}{a}\right)^{-i\sqrt\lambda/a} e^{-i\sqrt\lambda z_0}.\]
Therefore,
\begin{align*}
	&|v_-(0,\lambda)|^2 \\
	&= \exp\left(-\frac{2\sqrt Q}{a} e^{az_0}\right) \left| M\left(\frac{i\sqrt{\lambda}}{a} + \frac12 - \frac{\sqrt Q}{a}, \frac{2i\sqrt{\lambda}}{a} + 1, \frac{2\sqrt Q e^{az_0}}{a}\right) \right|^2
\end{align*}
and hence
\begin{align*}
	\sigma(\lambda) = \frac{\sqrt\lambda}{\pi} \exp\left(\frac{2\sqrt Q}{a} e^{az_0}\right) \left| M\left(\frac{i\sqrt{\lambda}}{a} + \frac12 - \frac{\sqrt Q}{a}, \frac{2i\sqrt{\lambda}}{a} + 1, \frac{2\sqrt Q e^{az_0}}{a}\right) \right|^{-2}
\end{align*}
by \eqref{eq:Kodaira-}.

Let us now consider $\lambda<0$, that is, search for bound states of $L$. Here, the substitution is
\[v(z,\lambda) = \left(\frac{s}{2}\right)^{\sqrt{-\lambda}/a} e^{-s/2} u(s,\lambda), \quad s = \frac{2\sqrt Q}{a} e^{-a(z-z_0)},\]
and \eqref{eq:def_v_ODE} becomes
\begin{equation}\label{eq:Kummer}
	s\frac{\dd^2 u}{\dd s^2} + \left(\frac{2\sqrt{-\lambda}}{a} + 1 - s\right) \frac{\dd u}{\dd s} - \left(\frac{\sqrt{-\lambda}}{a} + \frac12 - \frac{\sqrt Q}{a}\right) u = 0.
\end{equation}
Notice that $M(\cdot,\cdot,0)=1$, but the Tricomi function $U(\frac{\sqrt{-\lambda}}{a} + \frac12 - \frac{\sqrt Q}{a}, \frac{2\sqrt{-\lambda}}{a} + 1,\cdot)$, which is another solution of \eqref{eq:Kummer} linearly independent of $M(\frac{\sqrt{-\lambda}}{a} + \frac12 - \frac{\sqrt Q}{a}, \frac{2\sqrt{-\lambda}}{a} + 1,\cdot)$, blows up at $s=0$. Thus, a bound state $v$ must correspond to
\[u(s,\lambda) = c M\left(\frac{\sqrt{-\lambda}}{a} + \frac12 - \frac{\sqrt Q}{a}, \frac{2\sqrt{-\lambda}}{a} + 1, s\right)\]
for some $c\in\R\setminus\{0\}$. In order to meet the Dirichlet boundary condition at $z=0$, the energy level $\lambda$ therefore has to satisfy the equation
\begin{equation}\label{eq:eigenvalue}
	M\left(\frac{\sqrt{-\lambda}}{a} + \frac12 - \frac{\sqrt Q}{a}, \frac{2\sqrt{-\lambda}}{a} + 1, \frac{2\sqrt Q e^{az_0}}{a}\right) = 0.
\end{equation}
Depending on the parameters $Q$, $a$, and $z_0$, we can have arbitrarily many bound states. Indeed, in the regime of large $2\sqrt Q e^{az_0}/a$, one can use asymptotics of $M$ as $s\to\infty$ to see that the energies distribute according to the law
\[\sqrt{-\lambda} \approx \sqrt Q - a\left(j-\frac12\right),\quad j=1,2,\ldots;\]
see the discussion in \cite{terHaar46}.

\begin{figure}
	\includegraphics{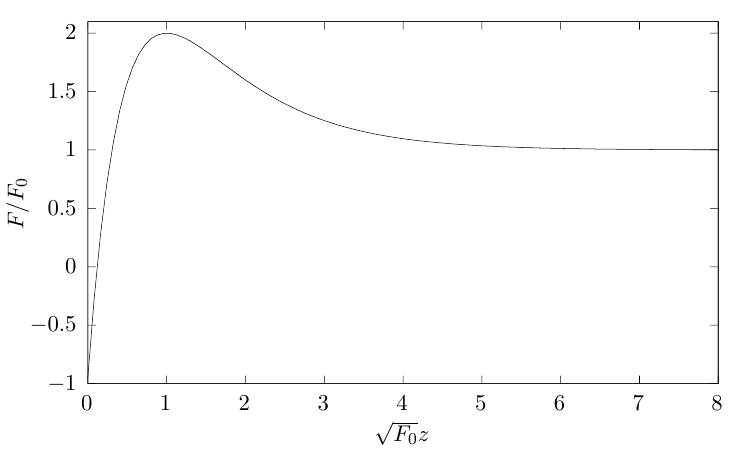}
	\caption{The Scorer parameter $F=F_0-q$, with $q$ and $F_0$ as in \eqref{eq:Morse_potential} and \eqref{eq:Morse_example}.}
	\label{fig:ScorerMorse}
\end{figure}

For illustration in a specific example, we choose parameters such that
\begin{equation}\label{eq:Morse_example}
	F_0=Q=a^2=z_0^{-2};
\end{equation}
see Fig.~\ref{fig:ScorerMorse} for the corresponding Scorer parameter. Notice that $\Delta+F$ is invariant, up a multiplicative constant, under the rescaling
\[(x,z,F_0,Q,a,z_0) \to (\kappa x,\kappa z,\kappa^{-2}F_0,\kappa^{-2}Q,\kappa^{-1}a,\kappa z_0),\quad \kappa>0,\]
so that the choice \eqref{eq:Morse_example} is up to rescaling effectively the same as the choice $F_0=Q=a=z_0=1$.

Now, \eqref{eq:eigenvalue} has exactly one solution $\lambda_1$ with
\[\frac{\lambda_1}{F_0}\approx-0.180,\]
Therefore, in the corresponding kernel $K$ we observe a trapped lee wave; see Fig.~\ref{fig:KernelMorse}. Notice that for the plot of $K$ we only need to compute $\lambda_1$ and the $\lambda$-integrals in \eqref{eq:K^e} and \eqref{eq:K^r} numerically.

\begin{figure}
	\includegraphics{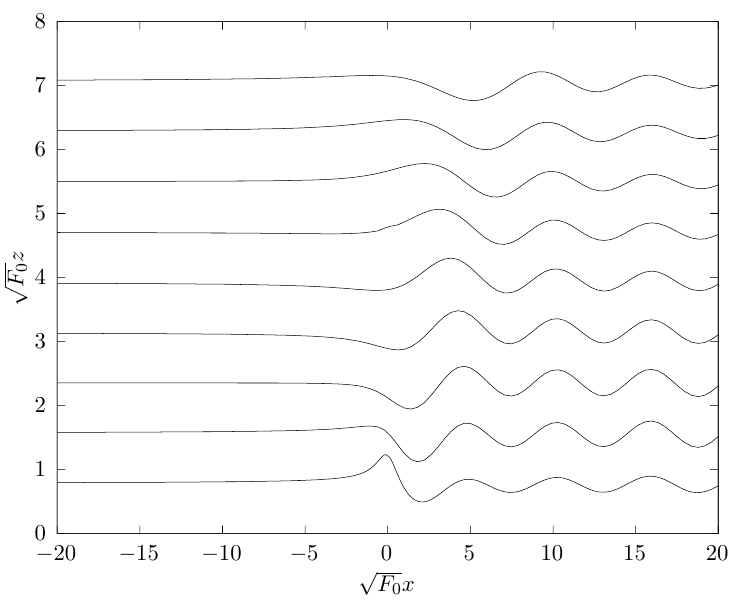}
	\caption{The kernel $K$, plotted at various equidistant altitudes, for $F=F_0-q$ (see Fig.~\ref{fig:ScorerMorse}), where $F_0>0$ and $q$ is the Morse potential in \eqref{eq:Morse_potential} with parameters as in \eqref{eq:Morse_example}. We recognize one trapped lee wave at frequency $\sqrt{F_0-\lambda_1}\approx 1.086\sqrt{F_0}$. The wave energy is trapped in a lower layer, and the wave propagates far downstream.}
	\label{fig:KernelMorse}
\end{figure}

\section{Rigorous mathematics}\label{sec:rigorous}
Finally, we clarify rigorously that, for arbitrary $F$ satisfying \eqref{ass:A}, the solution formula \eqref{eq:formula_w} is well-defined and provides indeed a solution to \eqref{eq:BVP} satisfying the principle \ref{cond:L2}. We shall also make clear for which boundary data $f$ this can be done.

To this end, we shall need the following lemma. Since we are not aware of a precise reference for all of these properties, we include a proof in Appendix~\ref{appx:proof}. Here and in the following, $c>0$ denotes some generic constant that may change from line to line.

\begin{lemma}\label{lem:spectral_results}
	Let $F$ satisfy \eqref{ass:A}. Then,
	\begin{align}
		\sigma &\in C((0,\infty)),\label{eq:sigma_continuous}\\
		\sigma(\lambda) &= \mathcal O\left(\frac{1}{\sqrt\lambda}\right),\quad \lambda\to 0,\label{eq:sigma_asymptotics_zero}\\
		\sigma(\lambda) &= \frac{\sqrt\lambda}{\pi}+\mathcal O\left(\frac{1}{\sqrt\lambda}\right),\quad \lambda\to\infty,\label{eq:sigma_asymptotics_infinity}\\
		|v(z,\lambda)| &\le \frac{c}{\sqrt\lambda},\label{eq:est_v}\\
		v(z,\lambda)-\frac{\sin(\sqrt\lambda z)}{\sqrt\lambda} &= -\frac{\cos(\sqrt\lambda z)}{2\lambda}\int_0^z q(z')\,\dd z'\, + \mathcal O(\lambda^{-3/2}),\quad \lambda\to\infty,\label{eq:asymp_v-v0}\\
		|v(z,\lambda)\sigma(\lambda)| &\le \begin{cases}c,&\lambda\ge1,\\\frac{c}{\sqrt\lambda},&0<\lambda\le1.\end{cases}\label{eq:est_v*sigma}
	\end{align}
	Here, all constants and error bounds are uniform in $\lambda$ and $z$. Furthermore, for $k\in\N_0$, if $F-F_0$ has $k+1$ moments, then
	\begin{equation}\label{eq:sigma_smooth}
		\sigma\in C^k((0,\infty)).
	\end{equation}
\end{lemma}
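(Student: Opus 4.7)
The plan is to combine Volterra integral representations for $v$ and the Jost solution $v_-$ (characterized by $v_-(z,\lambda)\sim e^{-i\sqrt\lambda z}$ as $z\to\infty$) with the Kodaira formula $\sigma(\lambda)=\sqrt\lambda/(\pi|v_-(0,\lambda)|^2)$ already invoked in Section~\ref{sec:examples}. Direct differentiation gives
\begin{align*}
v(z,\lambda) &= \frac{\sin(\sqrt\lambda z)}{\sqrt\lambda} + \int_0^z \frac{\sin(\sqrt\lambda(z-z'))}{\sqrt\lambda}\, q(z')\, v(z',\lambda)\,\dd z',\\
v_-(z,\lambda) &= e^{-i\sqrt\lambda z} + \int_z^\infty \frac{\sin(\sqrt\lambda(z'-z))}{\sqrt\lambda}\, q(z')\, v_-(z',\lambda)\,\dd z'.
\end{align*}
Using $|\sin(\sqrt\lambda t)/\sqrt\lambda|\le 1/\sqrt\lambda$ and Gronwall on the rescaled function $\sqrt\lambda\,|v(z,\lambda)|$ gives \eqref{eq:est_v} from $q\in L^1$, while the sharper kernel estimate $|\sin(\sqrt\lambda t)/\sqrt\lambda|\le t$ combined with the moment hypothesis $\int_0^\infty z|q(z)|\,\dd z<\infty$ in \eqref{ass:A} yields uniform bounds on $|v_-|$ by Gronwall. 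Continuity of $\lambda\mapsto v_-(0,\lambda)$ follows from standard parameter dependence of Volterra solutions, and this together with the fact that $q\in L^1$ excludes embedded eigenvalues, i.e. $v_-(0,\lambda)\neq 0$ for $\lambda>0$, gives \eqref{eq:sigma_continuous}.

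For \eqref{eq:asymp_v-v0}, I would insert the integral equation for $v$ into itself once, convert $\sin(\sqrt\lambda(z-z'))\sin(\sqrt\lambda z')$ via the product-to-sum identity, and isolate the non-oscillatory contribution $-\cos(\sqrt\lambda z)\int_0^z q(z')\,\dd z'/(2\lambda)$. The companion oscillatory integral $\int_0^z\cos(\sqrt\lambda(z-2z'))q(z')\,\dd z'$ is $O(1/\sqrt\lambda)$ after one integration by parts, using $q'=-F'\in L^1$ (hypothesis \eqref{ass:A}), and the second-iterate remainder is $O(\lambda^{-3/2})$ by the already established bound $v(z,\lambda)-\sin(\sqrt\lambda z)/\sqrt\lambda=O(1/\lambda)$. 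The same integration-by-parts argument applied to the $v_-$ equation at $z=0$ yields $v_-(0,\lambda)=1+O(1/\lambda)$ as $\lambda\to\infty$, from which \eqref{eq:sigma_asymptotics_infinity} is immediate via Kodaira.

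For \eqref{eq:est_v*sigma}, I would exploit the Wronskian identity $W[v_-,\overline{v_-}]=2i\sqrt\lambda$ to write
\[v(z,\lambda)=-\frac{\Imag\bigl(\overline{v_-(0,\lambda)}\,v_-(z,\lambda)\bigr)}{\sqrt\lambda},\]
which combined with the uniform bound on $v_-$ gives $|v(z,\lambda)|\le C|v_-(0,\lambda)|/\sqrt\lambda$ and hence $|v(z,\lambda)\sigma(\lambda)|\le C/|v_-(0,\lambda)|$. The claimed estimate thus reduces to $|v_-(0,\lambda)|\ge c$ for $\lambda\ge 1$ (which follows from the $\lambda\to\infty$ expansion above) and $|v_-(0,\lambda)|\ge c\sqrt\lambda$ for $\lambda\in(0,1]$. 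The latter bound, and also \eqref{eq:sigma_asymptotics_zero}, is obtained from a $\lambda\to 0^+$ analysis of the Volterra equation for $v_-$: using again the moment condition in \eqref{ass:A}, $v_-(0,\lambda)$ either tends to a nonzero limit (non-resonant case, giving $\sigma(\lambda)\sim c\sqrt\lambda$) or vanishes to first order in $\sqrt\lambda$ (half-bound-state case, giving $\sigma(\lambda)\sim c/\sqrt\lambda$), so in either case \eqref{eq:sigma_asymptotics_zero} holds. Finally, \eqref{eq:sigma_smooth} follows by differentiating the Volterra equation for $v_-$ in $\lambda$ repeatedly: each $\partial_\lambda$ of $\sin(\sqrt\lambda(z'-z))/\sqrt\lambda$ produces one extra power of $(z'-z)$, so $C^k$-regularity requires precisely $k+1$ moments of $F-F_0$. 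The main obstacle is the low-energy step, because \eqref{ass:A} does not rule out a threshold resonance and the lower bound $|v_-(0,\lambda)|\gtrsim\sqrt\lambda$ must be established uniformly down to $\lambda=0$ even in that borderline case.
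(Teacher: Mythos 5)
Your overall architecture coincides with the paper's: Kodaira's formula, Jost solutions and their Volterra equations, the scattering-relation identity expressing $\sqrt\lambda\,v(z,\lambda)$ through $v_\pm$ so that $|v\sigma|$ is controlled by $1/|v_\pm(0,\lambda)|$, and the non-resonant/resonant dichotomy at $\lambda=0$. The difference is one of packaging: where you propose to derive the estimates directly by iterating and integrating by parts in the Volterra equations, the paper outsources them --- the transformation-operator representation $v_+(0,\lambda)=1+\int_0^\infty G(0,t)e^{i\sqrt\lambda t}\,\dd t$ from Marchenko for \eqref{eq:sigma_continuous} and \eqref{eq:sigma_smooth}, Eastham's lemmas for \eqref{eq:asymp_v-v0} and for the Weyl--Titchmarsh coefficients $a,b$ giving \eqref{eq:sigma_asymptotics_infinity}, and the Faddeev-solution bound $|\chi_\pm|\le e^M$ for the small-$\lambda$ part of \eqref{eq:est_v*sigma}. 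Your direct derivations of \eqref{eq:asymp_v-v0} and of the identity $v=-\Imag(\overline{v_-(0,\lambda)}v_-(z,\lambda))/\sqrt\lambda$ are sound.

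Two points need attention. The step you yourself flag as the main obstacle is a genuine gap: the uniform bound $\sqrt\lambda/v_+(0,\lambda)=\mathcal O(1)$ as $\lambda\to0$, valid through a possible threshold resonance, is exactly what the paper imports from \cite[Lem.~3.1.6]{Marchenko11}; asserting that $v_\pm(0,\lambda)$ \enquote{vanishes to first order in $\sqrt\lambda$} in the resonant case is the content of that lemma, not a consequence of the Volterra equation alone, and it is precisely where the first-moment hypothesis in \eqref{ass:A} enters essentially --- both \eqref{eq:sigma_asymptotics_zero} and the small-$\lambda$ half of \eqref{eq:est_v*sigma} hinge on it. Relatedly, your Gronwall argument for \eqref{eq:est_v} using only $q\in L^1$ produces a constant of the form $e^{\|q\|_{L^1}/\sqrt\lambda}$, which is not uniform as $\lambda\to0$; the claimed uniformity requires the kernel bound $|\sin(\sqrt\lambda t)/\sqrt\lambda|\le Ct/(1+\sqrt\lambda t)$ together with the moment condition (this is what \cite[Lem.~3.1.4]{Marchenko11} encodes). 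Finally, a small slip: $v_-(0,\lambda)=1+\mathcal O(1/\lambda)$ is false in general; the expansion is $1-\tfrac{i}{2\sqrt\lambda}\int_0^\infty q\,\dd z+\mathcal O(1/\lambda)$, with a purely imaginary term at order $\lambda^{-1/2}$. The conclusion \eqref{eq:sigma_asymptotics_infinity} survives because this term cancels in $|v_-(0,\lambda)|^2=1+\mathcal O(1/\lambda)$, but the argument should say so.
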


We can now prove good estimates of the kernel $K$. Let us first recall \eqref{eq:K_split}--\eqref{eq:K^t} and \eqref{eq:K0}. The first basic observation is that all integrals appearing in the formula for $K$ are (at least for $x\ne0$) well-defined by Lemma~\ref{lem:spectral_results}, but we can also establish a more precise statement.
\begin{lemma}\label{lem:K_est}
	Under Assumption \eqref{ass:A}, the singular part of $K$ is precisely $K_0$ in \eqref{eq:K0}, in the sense that $K-K_0$ is continuous on $\overline{\R^2_+}$ and
	\[|K(x,z)-K_0(x,z)| \le c\]
	with $c>0$ independent of $x$ and $z$.
\end{lemma}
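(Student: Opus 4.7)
The plan is to handle the three pieces of $K$ from \eqref{eq:K_split}--\eqref{eq:K^t} separately. The trapped piece $K^{\mathrm{t}}$ is a finite sum of products of $\sin(\sqrt{F_0-\lambda_j}\,x)/\sqrt{F_0-\lambda_j}$ (bounded and continuous in $x$, vanishing at $x=0$ and identically zero on $\{x\le 0\}$) and the normalized Dirichlet eigenfunctions $v(\cdot,\lambda_j)$ (continuous and bounded in $z$ by $L^2$-membership), so $K^{\mathrm{t}}$ is continuous and uniformly bounded on $\overline{\R^2_+}$. For $K^{\mathrm{r}}$ the estimate \eqref{eq:est_v*sigma} dominates the integrand in \eqref{eq:K^r} by an $L^1$-function of $\lambda$ uniformly in $(x,z)$; dominated convergence gives continuity on $\{x>0\}$, and the prefactor $\sin(\sqrt{F_0-\lambda}\,x)$ forces $K^{\mathrm{r}}\to 0$ as $x\to 0^+$, matching $K^{\mathrm{r}}\equiv 0$ on $\{x\le 0\}$.

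The main work is $K^{\mathrm{e}}-K_0$. Substituting $\mu=\sqrt{\lambda-F_0}$ in \eqref{eq:K^e} and $\mu=\sqrt\lambda$ in \eqref{eq:K0} puts both integrals on the common variable $\mu\in(0,\infty)$ and yields
\[
K^{\mathrm{e}}(x,z)-K_0(x,z) \;=\; \int_0^\infty e^{-\mu|x|}\,G(\mu,z)\,\dd\mu,\qquad G(\mu,z):=v(z,\mu^2+F_0)\,\sigma(\mu^2+F_0)-\frac{\sin(\mu z)}{\pi}.
\]
Splitting $\int_0^\infty=\int_0^M+\int_M^\infty$ for fixed large $M$, the low-frequency part is continuous and uniformly bounded on $\overline{\R^2_+}$ by \eqref{eq:sigma_continuous}, \eqref{eq:est_v*sigma}, and dominated convergence. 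For the high-frequency part I would plug \eqref{eq:asymp_v-v0} and \eqref{eq:sigma_asymptotics_infinity} into $G$, and with $\nu:=\sqrt{\mu^2+F_0}$ and $\nu-\mu=F_0/(\nu+\mu)$, expand $\sin(\nu z),\cos(\nu z)$ around $\sin(\mu z),\cos(\mu z)$ via the addition formulas, arriving at
\[
G(\mu,z) \;=\; \frac{\cos(\mu z)}{2\pi\mu}\int_0^z F(z')\,\dd z' \;+\; R(\mu,z),\qquad \mu\ge M,
\]
with a remainder $R$ of pointwise magnitude $O(\mu^{-2})$ (with an explicit, manageable $z$-dependence).

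The hard part is the uniform boundedness of $\int_M^\infty e^{-\mu|x|}G(\mu,z)\,\dd\mu$ near $(x,z)=(0,0)$, where neither $e^{-\mu|x|}$ nor the oscillation of $\cos(\mu z)$ provides decay. The key structural facts are that $G(\mu,0)\equiv 0$ (since $v(0,\cdot)=0$ and $\sin 0=0$) and that $\int_0^z F(z')\,\dd z'=O(z)$ as $z\to 0$. I would therefore split the $(\mu,z)$-domain into $\{\mu z\le 1\}$ and $\{\mu z\ge 1\}$: in the former, Taylor expansion of $\cos(\mu z)$ together with the $O(z)$ vanishing of $\int_0^z F$ absorbs the formally singular $1/\mu$-factor; in the latter, integration by parts in $\mu$ against $\cos(\mu z)$ produces a compensating $1/z$-factor that balances the linear growth of $\int_0^z F$. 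The remainder $R$ is controlled by the same two devices applied to its oscillatory and non-oscillatory components. Once this uniform $L^\infty$-bound has been established, continuity of $K-K_0$ on $\overline{\R^2_+}$, including at the origin, follows by dominated convergence.
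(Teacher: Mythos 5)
Your handling of $K^{\mathrm{r}}$ and $K^{\mathrm{t}}$ agrees with the paper's (both pieces are absolutely convergent and unproblematic), but for the main term $K^{\mathrm{e}}-K_0$ you take a genuinely different route. The paper stays in the $\lambda$-variable and splits the difference into four absolutely convergent pieces: the difference of the two exponential kernels times $v\sigma$ (which is $O(\lambda^{-3/2})$, so no oscillation is needed there), the replacement $\sigma\mapsto\sqrt\lambda/\pi$, the replacement $v\mapsto\sin(\sqrt\lambda z)/\sqrt\lambda$, and the low-$\lambda$ tail of $K_0$; the only singular contribution is the explicit term $\frac{1}{4\pi}\int_0^z q\,\dd z'\cdot\int_{F_0}^\infty e^{-\sqrt\lambda|x|}\cos(\sqrt\lambda z)\lambda^{-1}\,\dd\lambda$, whose logarithmic singularity at the origin is annihilated by the $O(z)$ factor $\int_0^z q$. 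Your substitution $\mu=\sqrt{\lambda-F_0}$ versus $\mu=\sqrt\lambda$ instead transfers the kernel difference into the vertical oscillations, producing the cross term $\pi^{-1}\bigl(\sin(\nu z)-\sin(\mu z)\bigr)$ with $\nu=\sqrt{\mu^2+F_0}$. Here your claim that the remainder $R$ is pointwise $O(\mu^{-2})$ is too optimistic: this cross term is only $O(z/\mu)$, and after extracting its leading part $\frac{F_0 z\cos(\mu z)}{2\pi(\nu+\mu)}$ (which indeed combines with $-\frac{\cos(\nu z)}{2\pi\nu}\int_0^z q$ to produce your $\frac{\cos(\mu z)}{2\pi\mu}\int_0^z F$) the expansion error is $O(z^2/\mu^2)$, which is not uniformly in $z$ integrable against $e^{-\mu|x|}$ as $x\to0$. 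Your two devices do repair this --- writing the difference as $2\cos\bigl(\tfrac{(\nu+\mu)z}{2}\bigr)\sin\bigl(\tfrac{(\nu-\mu)z}{2}\bigr)$ and integrating by parts in the phase on $\{\mu z\ge 1\}$ shows $\int_M^\infty e^{-\mu|x|}(\sin(\nu z)-\sin(\mu z))\,\dd\mu=O(1)$ uniformly --- but this genuine oscillatory-integral work is precisely what the paper's decomposition avoids. In short: your route is viable and arguably more transparent about where the $F_0 z$ cancellation occurs, but it trades the paper's single explicit exponential-integral estimate for a stationary-phase argument, and the stated $O(\mu^{-2})$ bound on $R$ must be weakened and compensated as above for the proof to close.
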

\begin{proof}
	The deviation of the evanescent from the unperturbed piece can be written as follows:
	\begin{align*}
		&K^{\mathrm{e}}(x,z) - K_0(x,z) \\
		&= \int_{F_0}^\infty\frac{e^{-\sqrt{\lambda-F_0}|x|}}{2\sqrt{\lambda-F_0}}v(z,\lambda)\sigma(\lambda)\,\dd\lambda - \int_0^\infty \frac{e^{-\sqrt\lambda|x|}}{2\sqrt\lambda}\frac{\sin(\sqrt\lambda z)}{\sqrt\lambda}\frac{\sqrt\lambda}{\pi}\,\dd\lambda\\
		&= \int_{F_0}^\infty\left(\frac{e^{-\sqrt{\lambda-F_0}|x|}}{2\sqrt{\lambda-F_0}}-\frac{e^{-\sqrt\lambda|x|}}{2\sqrt\lambda}\right)v(z,\lambda)\sigma(\lambda)\,\dd\lambda\\
		&\quad\;+\int_{F_0}^\infty \frac{e^{-\sqrt\lambda|x|}}{2\sqrt\lambda}v(z,\lambda)\left(\sigma(\lambda)-\frac{\sqrt\lambda}{\pi}\right)\,\dd\lambda\\
		&\quad\;+\int_{F_0}^\infty \frac{e^{-\sqrt\lambda|x|}}{2\sqrt\lambda}\left(v(z,\lambda)-\frac{\sin(\sqrt\lambda z)}{\sqrt\lambda}\right)\frac{\sqrt\lambda}{\pi}\,\dd\lambda\\
		&\quad\;-\int_0^{F_0} \frac{e^{-\sqrt\lambda|x|}}{2\sqrt\lambda}\frac{\sin(\sqrt\lambda z)}{\sqrt\lambda}\frac{\sqrt\lambda}{\pi}\,\dd\lambda\\
		&\eqqcolon I_1+I_2+I_3+I_4.
	\end{align*}
	First, since $\dd(e^{-y}/y)/\dd y=-e^{-y}(y+1)/y^2$, we have
	\begin{align*}
		\left|\frac{e^{-\sqrt{\lambda-F_0}|x|}}{2\sqrt{\lambda-F_0}}-\frac{e^{-\sqrt\lambda|x|}}{2\sqrt\lambda}\right| &\le \frac{e^{-\sqrt{\lambda-F_0}|x|}(\sqrt{\lambda-F_0}|x|+1)}{2(\lambda-F_0)}\, (\sqrt\lambda-\sqrt{\lambda-F_0}) \\
		&\le c\lambda^{-3/2}
	\end{align*}
	for $\lambda\ge F_0+1$ independent of $x$. Thus and by \eqref{eq:est_v*sigma},
	\[|I_1| \le \int_{F_0}^{F_0+1} \frac{c}{\sqrt{\lambda-F_0}}\,\dd\lambda + c\int_{F_0+1}^\infty \lambda^{-3/2} \,\dd\lambda < \infty.\]
	Second, by \eqref{eq:sigma_asymptotics_infinity} and \eqref{eq:est_v},
	\[|I_2| \le c \int_{F_0}^\infty \lambda^{-3/2}\,\dd\lambda < \infty.\]
	Third, by \eqref{eq:est_v} and \eqref{eq:asymp_v-v0},
	\[\left|I_3 + \frac{1}{4\pi} \int_0^z q(z')\,\dd z' \int_{F_0}^\infty \frac{e^{-\sqrt\lambda|x|} \cos(\sqrt\lambda z)}{\lambda} \,\dd\lambda\right| \le c \int_{F_0}^\infty \lambda^{-3/2}\,\dd\lambda < \infty.\]
	Here, we realize that
	\[\int_1^\infty \frac{e^{-\sqrt\lambda|x|} \cos(\sqrt\lambda z)}{\lambda} \,\dd\lambda = 2\Real \int_1^\infty \frac{e^{\mu(-|x|+iz)}}{\mu} \,\dd\mu.\]
	It is well-known that this exponential integral has a logarithmic singularity near $(x,z)=(0,0)$ and, away from $(0,0)$, is bounded. Therefore and by $q\in L^1([0,\infty))\cap L^\infty([0,\infty))$,
	\[|I_3| \le c + c\left|\int_0^z q(z')\,\dd z' \, \Real \int_1^\infty \frac{e^{\mu(-|x|+iz)}}{\mu} \,\dd\mu\right| \le c.\]
	Fourth, clearly
	\[|I_4| \le c.\]
	Summing up, we have proved that
	\[|K^{\mathrm{e}}(x,z) - K_0(x,z)| \le c,\]
	where $c$ is independent of $x$ and $z$.
	
	The radiated and trapped pieces of $K$ are fortunately much easier to handle. Indeed, by \eqref{eq:est_v*sigma} we have
	\[|K^{\mathrm{r}}(x,z)|\le c\int_0^{F_0} \frac{\dd\lambda}{\sqrt{F_0-\lambda} \sqrt\lambda} <\infty\]
	and, by just being a finite sum,
	\[|K^{\mathrm{t}}(x,z)|\le c\]
	in view of \eqref{eq:est_v}.
	
	The continuity of $K-K_0$ follows directly from the above estimates.
\end{proof}

Now we are ready to prove our main theorem.
\begin{theorem}\label{thm:main}
	Let \eqref{ass:A} hold, $f\in C(\R)\cap L^1(\R)$, and $w$ be defined by \eqref{eq:formula_w}. Then:
	\begin{enumerate}[label=(\alph*)]
		\item\label{thm:main_a} (Solution of \eqref{eq:BVP}.) We have $w\in H^2_{\mathrm{loc}}(\R^2_+)$. Moreover, $w$ solves \eqref{eq:BVP_PDE} in the strong sense, and \eqref{eq:BVP_BC} in the sense that $w(x,z)\to f(x)$ as $z\to0$ for any fixed $x$.
		\item\label{thm:main_b} (Stability.) It holds that
		\[\|w\|_{L^\infty(\R^2)} \le c\|f\|_{L^1(\R)} + \|f\|_{L^\infty(\R)}.\]
		\item\label{thm:main_c} (Radiation condition.) Let $f$ be compactly supported in the negative $x$-direction and have three moments (w.r.t. $x$), and let $F-F_0$ have four moments (w.r.t. $z$). Then, for fixed $z$, $w(x,z)$ is monotone in $x$ for large negative $x$. More precisely,
		\begin{align}
			&w_x(x,z)\nonumber\\
			&= \frac{\int_\R f(x')\,\dd x'\, v(z,F_0) \sigma(F_0)}{x^2} + \frac{2\int_\R x'f(x')\,\dd x'\,v(z,F_0) \sigma(F_0)}{x^3} + o(|x|^{-3})\label{eq:wx_asymptotics}
		\end{align}
		as $x\to-\infty$.
	\end{enumerate}
\end{theorem}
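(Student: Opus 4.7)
My plan is to split $w = w_0 + w_1$, where $w_0 \coloneqq K_0 \ast f$ is the classical Poisson integral on the upper half-plane and $w_1 \coloneqq (K - K_0) \ast f$ is a mild perturbation, then exploit Lemma~\ref{lem:K_est}.

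For parts~\ref{thm:main_a} and~\ref{thm:main_b}, $w_0$ is harmonic in $\R^2_+$, satisfies $w_0(x, z) \to f(x)$ as $z \to 0^+$ whenever $f$ is continuous, and obeys $\|w_0\|_{L^\infty} \le \|f\|_{L^\infty}$ since $K_0 \ge 0$ has unit $L^1$-norm in $x$ for each $z > 0$. Lemma~\ref{lem:K_est} gives $\|K - K_0\|_{L^\infty} \le C$, hence $\|w_1\|_{L^\infty} \le C\|f\|_{L^1}$, which together with the Poisson bound yields~\ref{thm:main_b}. For the boundary limit, the continuity of $K - K_0$ on $\overline{\R^2_+}$ combined with $(K - K_0)(y, 0) = 0$ (both $K$ and $K_0$ vanish at $z = 0$, using $v(0, \cdot) = 0$) gives, via dominated convergence, $w_1(x, z) \to 0$ as $z \to 0^+$ for every $x$. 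To obtain the PDE~\eqref{eq:BVP_PDE}, I differentiate~\eqref{eq:formula_w} term-by-term under the spectral integral, justifying the interchange with the uniform bounds of Lemma~\ref{lem:spectral_results}, and apply~\eqref{eq:ODE_U} for $U$ together with~\eqref{eq:def_v_ODE} for $v$; the $H^2_{\mathrm{loc}}$ regularity then follows from interior elliptic regularity applied to $w_1$, which solves $\Delta w_1 = -F(w_0 + w_1)$ with a locally bounded right-hand side.

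For part~\ref{thm:main_c}, fix $z > 0$ and take $x$ so negative that $x - x' < 0$ for every $x'$ in the support of $f$. The pieces $K^{\mathrm r}$ and $K^{\mathrm t}$ then vanish on the relevant set due to their factor $1_{x > 0}$, so only $K^{\mathrm e}$ contributes. Substituting $\mu = \sqrt{\lambda - F_0}$ in~\eqref{eq:K^e} and differentiating in $x$ gives, for $y < 0$,
\[K_x(y, z) = \int_0^\infty \mu\, h(\mu)\, e^{-\mu|y|}\, \dd\mu, \qquad h(\mu) \coloneqq v(z, \mu^2 + F_0)\, \sigma(\mu^2 + F_0).\]
The function $h$ is even in $\mu$ and, by~\eqref{eq:sigma_smooth} with $k = 3$ (guaranteed by four moments of $F - F_0$) together with the smooth $\lambda$-dependence of $v(z, \cdot)$, is $C^4$ near $0$, so $h(\mu) - h(0) = O(\mu^2)$ locally while $|h|$ is globally bounded by~\eqref{eq:est_v*sigma}. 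A Watson-type splitting at a small $\mu_0 > 0$ then yields
\[K_x(y, z) = \frac{h(0)}{y^2} + O(|y|^{-4}), \qquad y \to -\infty,\]
with $h(0) = v(z, F_0)\, \sigma(F_0)$. To pass from $K_x$ to $w_x$, I use the integral Taylor remainder
\[(x - x')^{-2} - x^{-2} - 2x'/x^3 = 6\, x'^2 \int_0^1 (1 - t)(x - t x')^{-4}\, \dd t,\]
which is bounded by $C\, x'^2/|x|^4$ whenever $|x| \ge 2|x'|$. Integrating the resulting expansion of $K_x(x - x', z)$ against $f$ and invoking the moments of $f$ to control both this remainder and the $O(|x - x'|^{-4})$ leftover from $K_x$ then yields~\eqref{eq:wx_asymptotics}.

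The main obstacle is rigorously establishing the Watson-type bound $h(\mu) - h(0) = O(\mu^2)$: this rests on the $C^3$-regularity of $\sigma$ at $F_0$ supplied by~\eqref{eq:sigma_smooth} plus the smooth $\lambda$-dependence of the Weyl solution $v(z, \cdot)$, and the tails of the $\mu$-integrals must be shown to contribute only exponentially small errors via~\eqref{eq:est_v*sigma}. Secondary subtleties are the rigorous interchange of $\Delta$ with the spectral integral in part~\ref{thm:main_a} and the pointwise boundary recovery, which relies on the delicate interplay between the Poisson-like concentration of $K_0$ as $z \to 0^+$ and the plain continuity of $K - K_0$ up to $\{z = 0\}$.
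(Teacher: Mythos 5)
Your proposal is correct, and for parts \ref{thm:main_a} and \ref{thm:main_b} it coincides with the paper's proof: the same splitting $w=(K-K_0)\ast f+K_0\ast f$, the same use of Lemma \ref{lem:K_est} together with $\|K_0(\cdot,z)\|_{L^1(\R)}=1$ for the stability bound, the same boundary recovery via continuity of $K-K_0$ up to $\{z=0\}$ (with $v(0,\lambda)=0$) plus classical Poisson-kernel theory, and the same appeal to the construction of Section \ref{sec:derivation} and elliptic regularity for the PDE. For part \ref{thm:main_c} you use the same core mechanism --- repeated integration by parts (Watson's lemma) in $\mu=\sqrt{\lambda-F_0}$, with the even-in-$\mu$ structure of $v(z,\mu^2+F_0)\sigma(\mu^2+F_0)$ killing the odd-order terms so that $v(z,F_0)\sigma(F_0)$ appears at order $x^{-2}$ --- but you organize it differently: you first expand $K_x(y,z)=h(0)/y^2+\mathcal O(|y|^{-4})$ and then convolve, which forces a second Taylor expansion of $(x-x')^{-2}$ plus tail estimates in $x'$. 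The paper instead applies Fubini first, writing $w_x(x,z)=\int_0^\infty \mu e^{\mu x}\bigl(\int_\R e^{-\mu x'}f(x')\,\dd x'\bigr)v(z,\mu^2+F_0)\sigma(\mu^2+F_0)\,\dd\mu+\mathcal O(e^x)$, and does a single Watson expansion of this $\mu$-integral; the three moments of $f$ then enter cleanly as $C^3$-smoothness of the Laplace transform $\mu\mapsto\int e^{-\mu x'}f(x')\,\dd x'$, avoiding your two-stage argument. One small repair to your version: the condition $|x|\ge 2|x'|$ under which you bound the Taylor remainder fails on the (only lower-bounded) support of $f$; you should instead use that $|x-tx'|\ge |x|-|R|$ for all $x'\ge R$ and $t\in[0,1]$ when $x\to-\infty$, which makes the remainder $\mathcal O(x'^2/|x|^4)$ uniformly on the support, and then the second and third moments of $f$ close the estimate exactly as you indicate.
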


\begin{remark}
	\begin{itemize}
		\item In \ref{thm:main_b}, we cannot expect an estimate with just the supremum norm of $f$ on the right-hand side since, by a similar argument as in Section~\ref{sec:F=const}, $K(\cdot,z)\notin L^1(\R)$ for all but countably many $z$.
		\item Formally, \eqref{eq:wx_asymptotics} only yields monotonicity for all but countably many $z$  provided the mean and the first moment of $f$ do not vanish simultaneously. If the latter happens to be true, under more regularity assumptions we could also compute the asymptotic expansion \eqref{eq:wx_asymptotics} up to order $m+2$, where $m$ is the smallest number such that the $m$-th moment of $f$ does not vanish. This would then also provide monotonicity.
		\item In the most important application of our theory, $f=\bar h_x u_0(0)$ is a derivative and vanishes at infinity. Therefore, its mean vanishes, but its first moment usually not (say, in the case of a symmetric mountain range and thus odd $f$). In this physically most relevant case, we have
		\[w_x(x,z) = \frac{2\int_\R x'f(x')\,\dd x'\,v(z,F_0) \sigma(F_0)}{x^3} + o(|x|^{-3})\]
		with nonzero leading order coefficient for all but countably many $z$.
	\end{itemize}
\end{remark}

\begin{proof}[Proof of Theorem~\ref{thm:main}]
	For \ref{thm:main_a}, first note that the convolution $w=K\ast f$ converges absolutely by Lemma~\ref{lem:K_est}. Also, by construction in Section~\ref{sec:derivation} it is clear that \eqref{eq:BVP_PDE} is satisfied, first at least in the sense of distributions, but then also in the strong sense by elliptic regularity. In particular, $w\in H^2_{\mathrm{loc}}(\R^2_+)$. Furthermore, by Lemma~\ref{lem:K_est} we can take the limit $z\to0$ under the integral to conclude
	\[\lim_{z\to0} ((K-K_0)(\cdot,z)\ast f)(x) = 0\]
	for fixed $x$, recalling $v(0,\lambda)=0$. Thus, the boundary condition \eqref{eq:BVP_BC} is satisfied by classical theory for the Poisson kernel $K_0$.
	
	Next, by Lemma~\ref{lem:K_est} and $\|K_0(\cdot,z)\|_{L^1(\R)}=1$, we have
	\begin{align*}
		&|w(x,z)|\\
		 &\le \int_\R |K(x-x',z)-K_0(x-x',z)| |f(x')|\,\dd x'\, + \int_\R K_0(x-x',z) |f(x')|\,\dd x'\\
		&\le c\|f\|_{L^1(\R)} + \|f\|_{L^\infty(\R)},
	\end{align*}
	which is \ref{thm:main_b}.
	
	As for \ref{thm:main_c}, let $R\in\R$ such that $f(x)=0$ for $x<R$. Then, for $x<R$,
	\begin{align*}
		w_x(x,z) &= \frac12\int_{F_0}^\infty e^{\sqrt{\lambda-F_0}x} \int_R^\infty e^{-\sqrt{\lambda-F_0}x'}f(x')\,\dd x'\,v(z,\lambda)\sigma(\lambda)\,\dd\lambda \\
		&= \int_0^1 \mu e^{\mu x} \int_\R e^{-\mu x'} f(x')\,\dd x'\, v(z,\mu^2+F_0) \sigma(\mu^2+F_0)\,\dd\mu + \mathcal O(e^x)
	\end{align*}
	Similarly as for \eqref{eq:Kx_asymptotics_F=const}, the general formula
	\begin{align*}
		\int_0^1 e^{\mu x}p(\mu)\,\dd\mu &= -\frac{p(0)}{x} + \frac{p'(0)}{x^2} - \frac{p''(0)}{x^3} - \frac{1}{x^3} \int_0^1 e^{\mu x} p'''(\mu)\,\dd\mu \,+ \mathcal O(e^x) \\
		&= -\frac{p(0)}{x} + \frac{p'(0)}{x^2} - \frac{p''(0)}{x^3} + o(|x|^{-3}),
	\end{align*}
	which holds for $p\in C^3([0,1])$ and  can easily be seen by successive integrations by parts, immediately implies \eqref{eq:wx_asymptotics}, noticing that we indeed have $\sigma\in C^3((0,\infty))$ by Lemma~\ref{lem:spectral_results}.
\end{proof}

\appendix

\section{Proof of Lemma~\ref{lem:spectral_results}}\label{appx:proof}
\begin{proof}
	We recall the Kodaira formula \cite[p. 940]{Kodaira49}
	\begin{equation}\label{eq:Kodaira}
		\sigma(\lambda)=\frac{\sqrt\lambda}{\pi|v_+(0,\lambda)|^2},
	\end{equation}
	where the Jost solutions $v_\pm$ solve \eqref{eq:def_v_ODE} and behave like $e^{\pm i\sqrt\lambda z}$ as $z\to\infty$. By \cite[Lem. 3.1.1]{Marchenko11} we know that
	\begin{equation}\label{eq:v+_kernel}
		v_+(0,\lambda) = 1 + \int_0^\infty G(0,z) e^{i\sqrt\lambda z}\,\dd z,
	\end{equation}
	for a certain kernel $G$ which satisfies
	\[|G(0,z)| \le \frac12 \int_{z/2}^\infty |q(t)|\,\dd t\, e^M\]
	with
	\[M\coloneqq\int_0^\infty z|q(z)|\,\dd z.\]
	(Here and in the following, note that $\sqrt\lambda$ is called $\lambda$ in \cite{Marchenko11}.) Clearly, $v_+(0,\cdot) \in C^k((0,\infty))$ provided $G(0,\cdot)$ has $k$ moments, since then one can differentiate under the integral sign in \eqref{eq:v+_kernel}. But
	\[\int_0^\infty z^k \int_{z/2}^\infty |q(t)|\,\dd t\dd z = \frac{2^{k+1}}{k+1} \int_0^\infty z^{k+1} |q(z)|\,\dd z.\]
	Moreover, in \cite{Kodaira49} it is shown that $v_+(0,\cdot)$ is continuous on $[0,\infty)$ and nonzero on $(0,\infty)$. Thus, \eqref{eq:sigma_continuous} and \eqref{eq:sigma_smooth} follow directly in view of \eqref{eq:Kodaira}.
	
	As for \eqref{eq:sigma_asymptotics_zero}, the behavior of $\sigma(\lambda)$ as $\lambda\to0$ is determined by the behavior of $v_+(0,\lambda)$: either $v_+(0,0)\ne0$ or $v_+(0,0)=0$ (the latter cannot be ruled out in general). If $v_+(0,0)\ne0$, then clearly $\sigma(\lambda)=\mathcal O(\sqrt\lambda)$ as $\lambda\to0$.
	If $v_+(0,0)=0$, however, we know at least that \cite[Lem. 3.1.6]{Marchenko11}
	\begin{equation}\label{eq:Jost_asymptotics_zero}
		\frac{\sqrt\lambda}{v_+(0,\lambda)} = \mathcal O(1), \quad \lambda\to0.
	\end{equation}
	Therefore, we have \eqref{eq:sigma_asymptotics_zero} in any case.
	
	Next, \eqref{eq:est_v} follows from \cite[Lem. 3.1.4]{Marchenko11}, and \eqref{eq:asymp_v-v0} is proved in \cite[Lem. 2]{Eastham97}. Turning to \eqref{eq:sigma_asymptotics_infinity}, we use the formula \cite{Titchmarsh46}
	\[\sigma(\lambda) = \frac{1}{\pi\sqrt\lambda(a(\lambda)^2+b(\lambda)^2)}\]
	for the usual Weyl--Titchmarsh coefficients
	\begin{align*}
		a(\lambda) &= -\frac{1}{\sqrt\lambda} \int_0^\infty \sin(\sqrt\lambda z) q(z) v(z,\lambda)\,\dd z,\\ b(\lambda) &= \frac{1}{\sqrt\lambda} + \frac{1}{\sqrt\lambda} \int_0^\infty \cos(\sqrt\lambda z) q(z) v(z,\lambda)\,\dd z.
	\end{align*}
	Here,
	\[a(\lambda) = \mathcal O(\lambda^{-1}),\quad b(\lambda) = \frac{1}{\sqrt\lambda} + \mathcal O(\lambda^{-3/2})\]
	by \cite[Lem. 3]{Eastham97}, and thus, \eqref{eq:sigma_asymptotics_infinity} follows immediately.
	
	Finally, \eqref{eq:est_v*sigma} follows directly from \eqref{eq:sigma_asymptotics_infinity} and \eqref{eq:asymp_v-v0} in case $\lambda\ge1$. For small $\lambda$, we have to take a detour and look at the Faddeev solution $\chi_\pm(z,\lambda)=e^{\mp i\sqrt\lambda z}v_\pm(z,\lambda)$, which solves the integral equation
	\[\chi_\pm(z,\lambda)=1\pm\int_z^\infty\frac{e^{\pm 2i\sqrt\lambda(z'-z)}-1}{2i\sqrt\lambda}q(z')\chi_\pm(z',\lambda)\,\dd z'.\]
	As in \cite[Pf. of Thm. 8.2.1]{BennewitzBrownWeikard20}, one can show by successive approximations that
	\[|\chi_\pm(z,\lambda)|\le e^M\]
	and thus
	\[|v_\pm(z,\lambda)|\le e^M\]
	as well. By the classical scattering relation (see, e.g., \cite[Lem. 3.1.5]{Marchenko11}) we have
	\[\frac{\sqrt\lambda v(z,\lambda)}{v_+(0,\lambda)}=\frac{S(\lambda)v_+(z,\lambda)-v_-(z,\lambda)}{2i},\]
	where
	\[S(\lambda) = \frac{v_-(0,\lambda)}{v_+(0,\lambda)} = \frac{\overline{v_+(0,\lambda)}}{v_+(0,\lambda)}\]
	is the scattering function. Thus, by \eqref{eq:Kodaira} and $|S(\lambda)|=1$,
	\[|v(z,\lambda)\sigma(\lambda)|=\frac{1}{\pi|v_+(0,\lambda)|}\cdot\left|\frac{S(\lambda)v_+(z,\lambda)-v_-(z,\lambda)}{2i}\right|\le\frac{e^M}{\pi|v_+(0,\lambda)|},\]
	from which \eqref{eq:est_v*sigma}, for small $\lambda$, follows in view of \eqref{eq:Jost_asymptotics_zero}.
\end{proof}

\paragraph{Acknowledgments}
This research was funded in whole or in part by the Austrian Science Fund (FWF) [10.55776/ESP8360524]. For open access purposes, the author has applied a CC BY public copyright license to any author-accepted manuscript version arising from this submission.

\bibliographystyle{amsplain}	
\bibliography{Scorer_ref}

@article {Henry24,
	AUTHOR = {Henry, D.},
	TITLE = {Exact solutions modelling nonlinear atmospheric gravity waves},
	JOURNAL = {J. Math. Fluid Mech.},
	FJOURNAL = {Journal of Mathematical Fluid Mechanics},
	VOLUME = {26},
	YEAR = {2024},
	NUMBER = {1},
	PAGES = {6},
	DOI = {10.1007/s00021-023-00842-3}
}

@article {Lyons25,
	AUTHOR = {Lyons, T.},
	TITLE = {Nonlinear mountain waves with active precipitation},
	JOURNAL = {Nonlinear Anal. Real World Appl.},
	FJOURNAL = {Nonlinear Analysis. Real World Applications. An International Multidisciplinary Journal},
	VOLUME = {85},
	YEAR = {2025},
	PAGES = {104351},
	DOI = {10.1016/j.nonrwa.2025.104351}
}

@article {LyonsMcCarney25,
	AUTHOR = {Lyons, T. and McCarney, J.},
	TITLE = {Temperature dependent precipitation in exact nonlinear	mountain waves},
	JOURNAL = {J. Math. Fluid Mech.},
	FJOURNAL = {Journal of Mathematical Fluid Mechanics},
	VOLUME = {27},
	YEAR = {2025},
	NUMBER = {3},
	PAGES = {41},
	DOI = {10.1007/s00021-025-00946-y}
}

@article {bdw,
	AUTHOR = {Bramberger, M. and D\"ornbrack, A. and Wilms, H. and Gemsa, S. and Raynor, C. and Sharman, R.},
	TITLE = {Vertically propagating mountain waves -- a hazard for high-flying aircraft?},
	JOURNAL = {J. Appl. Meteorol. Climatol.},
	FJOURNAL = {Journal of Applied Meteorology and Climatology},
	VOLUME = {57},
	NUMBER = {9},
	YEAR = {2018},
	PAGES = {1957--1975},
	DOI = {10.1175/JAMC-D-17-0340.1}
}

@incollection{Dur,
	author    = {D. R. Durran},
	editor    = {Holton, J.R. and Pyle, J. and Curry, J.A.},
	title     = {Lee waves and mountain waves},
	booktitle = {Encyclopedia of Atmospheric Sciences},
	year      = {2003},
	publisher = {Academic Press},
	address   = {Oxford},
	pages     = {1161--1169},
	doi 	  = {10.1016/B0-12-227090-8/00202-5}
}

@incollection{w,
	author    = {White, A. A.},
	editor    = {Norbury, J. and I. Roulstone},
	title     = {A view of the equations of meteorological dynamics and various approximations},
	booktitle = {Large-scale atmosphere-ocean dynamics},
	year      = {2002},
	publisher = {Cambridge University Press},
	pages     = {1--100},
	doi		  = {10.1017/CBO9780511549991.003}
}

@article {Constantin23,
	AUTHOR = {Constantin, A.},
	TITLE = {Exact nonlinear mountain waves propagating upwards},
	JOURNAL = {J. Phys. A},
	FJOURNAL = {Journal of Physics. A. Mathematical and Theoretical},
	VOLUME = {56},
	YEAR = {2023},
	NUMBER = {24},
	PAGES = {245702},
	DOI = {10.1088/1751-8121/acd429}
}

@article {cj,
	AUTHOR = {Constantin, A. and Johnson, R. S.},
	TITLE = {On the modelling of large-scale atmospheric flow},
	JOURNAL = {J. Differential Equations},
	FJOURNAL = {Journal of Differential Equations},
	VOLUME = {285},
	YEAR = {2021},
	PAGES = {751--798},
	DOI = {10.1016/j.jde.2021.03.019}
}

@article {ba,
	AUTHOR = {Barletta, A. and Celli, M. and Rees, D. A. S.},
	TITLE = {On the use and misuse of the {O}berbeck-{B}oussinesq approximation},
	JOURNAL = {Physics},
	VOLUME = {5},
	NUMBER = {1},
	YEAR = {2023},
	PAGES = {298--309},
	DOI = {10.3390/physics5010022}
}

@article {gg,
	AUTHOR = {Gray, D. D. and Giorgini, A.},
	TITLE = {The validity of the {B}oussinesq approximation for liquids and gases},
	JOURNAL = {Int. J. Heat Mass. Transf.},
	FJOURNAL = {International Journal of Heat and Mass Transfer},
	VOLUME = {19},
	NUMBER = {5},
	YEAR = {1976},
	PAGES = {545--551},
	DOI = {10.1016/0017-9310(76)90168-X}
}

@article {ho,
	AUTHOR = {Honka, K. P.},
	TITLE = {The tropopause: discovery, definition and demarcation},
	JOURNAL = {Meteorol. Z.},
	FJOURNAL = {Meteorologische Zeitschrift},
	VOLUME = {6},
	NUMBER = {6},
	YEAR = {1997},
	PAGES = {281--303},
	DOI = {10.1127/metz/6/1997/281}
}

@article {bi,
	AUTHOR = {Birner, T.},
	TITLE = {Fine-scale structure of the extratropical tropopause region},
	JOURNAL = {J. Geophys. Res. Atmos.},
	FJOURNAL = {Journal of Geophysical Research: Atmospheres},
	VOLUME = {111},
	NUMBER = {D4},
	YEAR = {2006},
	PAGES = {D04104},
	DOI = {10.1029/2005JD006301}
}

@incollection{Smith79,
	author = {R. B. Smith},
	title = {The Influence of Mountains on the Atmosphere},
	editor = {B. Saltzman},
	series = {Advances in Geophysics},
	publisher = {Elsevier},
	volume = {21},
	pages = {87--230},
	year = {1979},
	doi = {10.1016/S0065-2687(08)60262-9}
}

@article {MagnaniniSantosa01,
	AUTHOR = {Magnanini, R. and Santosa, F.},
	TITLE = {Wave propagation in a 2-{D} optical waveguide},
	JOURNAL = {SIAM J. Appl. Math.},
	FJOURNAL = {SIAM Journal on Applied Mathematics},
	VOLUME = {61},
	YEAR = {2001},
	NUMBER = {4},
	PAGES = {1237--1252},
	DOI = {10.1137/S0036139999360489}
}

@article {CiraoloMagnanini09,
	AUTHOR = {Ciraolo, G. and Magnanini, R.},
	TITLE = {A radiation condition for uniqueness in a wave propagation	problem for 2-{D} open waveguides},
	JOURNAL = {Math. Methods Appl. Sci.},
	FJOURNAL = {Mathematical Methods in the Applied Sciences},
	VOLUME = {32},
	YEAR = {2009},
	NUMBER = {10},
	PAGES = {1183--1206},
	DOI = {10.1002/mma.1084}
}

@article {Lyra43,
	AUTHOR = {Lyra, G.},
	TITLE = {Theorie der station\"aren {L}eewellenstr\"omung in freier {A}tmosph\"are},
	JOURNAL = {Z. Angew. Math. Mech.},
	FJOURNAL = {Zeitschrift f\"ur Angewandte Mathematik und Mechanik. Ingenieurwissenschaftliche Forschungsarbeiten},
	VOLUME = {23},
	NUMBER = {1},
	YEAR = {1943},
	PAGES = {1--28},
	DOI = {10.1002/zamm.19430230102}
}

@book {CoddingtinLevinson,
	AUTHOR = {Coddington, E. A. and Levinson, N.},
	TITLE = {Theory of ordinary differential equations},
	PUBLISHER = {McGraw-Hill Book Co., Inc.},
	ADDRESS = {New York-Toronto-London},
	YEAR = {1955},
	PAGES = {xii+429}
}

@article {EliassenPalm60,
	AUTHOR = {Eliassen, A. and Palm, E.},
	TITLE = {On the transfer of energy in stationary mountain waves},
	JOURNAL = {Geofys. Publ. Norske Vid. Akad. Oslo},
	FJOURNAL = {Geofysiske Publikasjoner Utgitt av det Norske Videnskaps-Akademi i Oslo},
	VOLUME = {22},
	YEAR = {1960},
	NUMBER = {3},
	PAGES = {1--23}
}

@book {Titchmarsh46,
	AUTHOR = {Titchmarsh, E. C.},
	TITLE = {Eigenfunction {E}xpansions {A}ssociated with {S}econd-{O}rder {D}ifferential {E}quations},
	PUBLISHER = {Clarendon Press},
	ADDRESS = {Oxford},
	YEAR = {1946},
	PAGES = {viii+174}
}

@book {Marchenko11,
	AUTHOR = {Marchenko, V. A.},
	TITLE = {Sturm-{L}iouville operators and applications},
	EDITION = {Revised},
	PUBLISHER = {AMS Chelsea Publishing},
	ADDRESS = {Providence, RI},
	YEAR = {2011},
	PAGES = {xiv+396},
	DOI = {10.1090/chel/373}
}

@book {va,
	AUTHOR = {Vallis, G. K.},
	TITLE = {Atmospheric and oceanic fluid dynamics: Fundamentals and Large-Scale Circulation},
	EDITION = {2},
	PUBLISHER = {Cambridge University Press},
	ADDRESS = {Cambridge},
	YEAR = {2017},
	DOI = {10.1017/9781107588417}
}

@book {BennewitzBrownWeikard20,
	AUTHOR = {Bennewitz, C. and Brown, M. and Weikard, R.},
	TITLE = {Spectral and scattering theory for ordinary differential equations. {V}ol. {I}: {S}turm-{L}iouville equations},
	SERIES = {Universitext},
	PUBLISHER = {Springer},
	ADDRESS = {Cham},
	YEAR = {2020},
	PAGES = {ix+379},
	DOI = {10.1007/978-3-030-59088-8}
}

@article {Kodaira49,
	AUTHOR = {Kodaira, K.},
	TITLE = {The eigenvalue problem for ordinary differential equations of the second order and {H}eisenberg's theory of {$S$}-matrices},
	JOURNAL = {Amer. J. Math.},
	FJOURNAL = {American Journal of Mathematics},
	VOLUME = {71},
	NUMBER = {4},
	YEAR = {1949},
	PAGES = {921--945},
	DOI = {10.2307/2372377}
}

@article{Scorer49,
	author = {Scorer, R. S.},
	title = {Theory of waves in the lee of mountains},
	journal = {Q. J. R. Meteorol. Soc.},
	fjournal = {Quarterly Journal of the Royal Meteorological Society},
	volume = {75},
	number = {323},
	pages = {41--56},
	doi = {10.1002/qj.49707532308},
	year = {1949}
}

@article{CIRA86,
	title = {Zonal mean temperature, pressure, zonal wind and geopotential height as functions of latitude},
	journal = {Adv. Space Res.},
	fjournal = {Advances in Space Research},
	volume = {10},
	number = {12},
	pages = {11--59},
	year = {1990},
	doi = {10.1016/0273-1177(90)90386-E},
	author = {E. L. Fleming and S. Chandra and J.J. Barnett and M. Corney},
	note = {\url{https://www.aparc-climate.org/data-centre/data-access/reference-climatology/cira-86/}}
}

@article{CIRA86_2,
	title = {Monthly mean global climatology of temperature, wind, geopotential height and pressure for 0--120 km},
	journal = {Adv. Space Res.},
	fjournal = {Advances in Space Research},
	volume = {10},
	number = {6},
	pages = {3--12},
	year = {1990},
	doi = {10.1016/0273-1177(90)90230-W},
	author = {S. Chandra and E. L. Fleming and M. R. Schoeberl and J. J. Barnett}
}

@article{Sommerfeld1912,
	author = {Sommerfeld, A.},
	journal = {Jahresber. Dtsch. Math.-Ver.},
	fjournal = {Jahresbericht der Deutschen Mathematiker-Vereinigung},
	pages = {309--352},
	title = {Die {G}reensche {F}unktion der {S}chwingungsgleichung.},
	url = {http://eudml.org/doc/145344},
	volume = {21},
	year = {1912}
}

@article {BonnetEtAl09,
	AUTHOR = {Bonnet-Ben Dhia, A.-S. and Dakhia, G. and Hazard, C. and Chorfi, L.},
	TITLE = {Diffraction by a defect in an open waveguide: a mathematical analysis based on a modal radiation condition},
	JOURNAL = {SIAM J. Appl. Math.},
	FJOURNAL = {SIAM Journal on Applied Mathematics},
	VOLUME = {70},
	YEAR = {2009},
	NUMBER = {3},
	PAGES = {677--693},
	DOI = {10.1137/080740155}
}

@article {DermenjianGuillot,
	AUTHOR = {Dermenjian, Y. and Guillot, J.-C.},
	TITLE = {Th\'eorie spectrale de la propagation des ondes acoustiques dans un milieu stratifi\'e{} perturb\'e},
	JOURNAL = {J. Differential Equations},
	FJOURNAL = {Journal of Differential Equations},
	VOLUME = {62},
	YEAR = {1986},
	NUMBER = {3},
	PAGES = {357--409},
	DOI = {10.1016/0022-0396(86)90091-4}
}

@book{Weder91,
	AUTHOR = {Weder, R.},
	TITLE = {Spectral and scattering theory for wave propagation in	perturbed stratified media},
	SERIES = {Applied Mathematical Sciences},
	VOLUME = {87},
	PUBLISHER = {Springer-Verlag},
	ADDRESS = {New York},
	YEAR = {1991},
	PAGES = {vi+188},
	DOI = {10.1007/978-1-4612-4430-1}
}

@book {Wilcox84,
	AUTHOR = {Wilcox, C. H.},
	TITLE = {Sound propagation in stratified fluids},
	SERIES = {Applied Mathematical Sciences},
	VOLUME = {50},
	PUBLISHER = {Springer-Verlag},
	ADDRESS = {New York},
	YEAR = {1984},
	PAGES = {ix+198},
	DOI = {10.1007/978-1-4612-1124-2}
}

@article{Schot92,
	title = {Eighty years of {S}ommerfeld's radiation condition},
	journal = {Historia Math.},
	fjournal = {Historia Mathematica},
	volume = {19},
	number = {4},
	pages = {385--401},
	year = {1992},
	doi = {10.1016/0315-0860(92)90004-U},
	author = {S. H. Schot}
}

@article {Xu95,
	AUTHOR = {Xu, Y.},
	TITLE = {Radiation condition and scattering problem for time-harmonic acoustic waves in a stratified medium with a nonstratified inhomogeneity},
	JOURNAL = {IMA J. Appl. Math.},
	FJOURNAL = {IMA Journal of Applied Mathematics},
	VOLUME = {54},
	YEAR = {1995},
	NUMBER = {1},
	PAGES = {9--29},
	DOI = {10.1093/imamat/54.1.9}
}

@article{HWM14,
	author = {Drob, D. P. and Emmert, J. T. and Meriwether, J. W. and Makela, J. J. and Doornbos, E. and Conde, M. and Hernandez, G. and Noto, J. and Zawdie, K. A. and McDonald, S. E. and Huba, J. D. and Klenzing, J. H.},
	title = {An update to the {H}orizontal {W}ind {M}odel ({HWM}): {T}he quiet time thermosphere},
	journal = {Earth Space Sci.},
	fjournal = {Earth and Space Science},
	volume = {2},
	number = {7},
	pages = {301--319},
	doi = {10.1002/2014EA000089},
	note = {\url{https://map.nrl.navy.mil/map/pub/nrl/HWM/HWM14/}},
	year = {2015}
}

@article{Corby54,
	author = {Corby, G. A.},
	title = {{The airflow over mountains}. A review of the state of current knowledge},
	journal = {Q. J. R. Meteorol. Soc.},
	fjournal = {Quarterly Journal of the Royal Meteorological Society},
	volume = {80},
	number = {346},
	pages = {491--521},
	doi = {10.1002/qj.49708034602},
	year = {1954}
}

@misc{Queney47,
	author = {Queney, P.},
	title = {Theory of perturbations in stratified currents with applications to air flow over mountain barriers},
	year = {1947},
	note = {Univ. of Chicago Press, Misc. Rep., no. 23}
}

@book{EliassenPalm54,
	publisher = {Institute for Weather and Climate Research},
	year = {1954},
	title = {Energy flux for combined gravitational-sound waves},
	address = {Oslo},
	author = {Eliassen, A and Palm, E.},
	number = {1}
}

@article {Calogero56,
	AUTHOR = {Calogero, F.},
	TITLE = {Upper and lower limits for the number of bound states in a	given central potential},
	JOURNAL = {Comm. Math. Phys.},
	FJOURNAL = {Communications in Mathematical Physics},
	VOLUME = {1},
	NUMBER = {1},
	YEAR = {1965},
	PAGES = {80--88},
	DOI = {10.1007/BF01649591}
}

@article {Bargmann52,
	AUTHOR = {Bargmann, V.},
	TITLE = {On the number of bound states in a central field of force},
	JOURNAL = {Proc. Natl. Acad. Sci. U.S.A.},
	FJOURNAL = {Proceedings of the National Academy of Sciences of the United States of America},
	VOLUME = {38},
	NUMBER = {11},
	YEAR = {1952},
	PAGES = {961--966},
	DOI = {10.1073/pnas.38.11.961}
}

@book {ReedSimonIV,
	AUTHOR = {Reed, M. and Simon, B.},
	TITLE = {Methods of modern mathematical physics. {IV}. {A}nalysis of operators},
	PUBLISHER = {Academic Press},
	ADDRESS = {New York},
	YEAR = {1978},
	PAGES = {xv+396}
}

@article{Eastham97,
	title = {The Asymptotic Nature of Spectral Functions in {S}turm--{L}iouville Problems with Continuous Spectrum},
	journal = {J. Math. Anal. Appl.},
	fjournal = {Journal of Mathematical Analysis and Applications},
	volume = {213},
	number = {2},
	pages = {573--582},
	year = {1997},
	doi = {10.1006/jmaa.1997.5560},
	author = {M .S. P. Eastham}
}

@article{Morse1929,
	title = {Diatomic Molecules According to the Wave Mechanics. {II}. {V}ibrational Levels},
	author = {Morse, P. M.},
	journal = {Phys. Rev.},
	fjournal = {Physical Review},
	volume = {34},
	number = {1},
	pages = {57--64},
	year = {1929},
	doi = {10.1103/PhysRev.34.57}
}

@article{terHaar46,
	title = {The Vibrational Levels of an Anharmonic Oscillator},
	author = {ter Haar, D.},
	journal = {Phys. Rev.},
	fjournal = {Physical Review},
	volume = {70},
	number = {3--4},
	pages = {222--223},
	year = {1946},
	doi = {10.1103/PhysRev.70.222}
}

@article {Smith19,
	author = {R. B. Smith},
	title = {100 Years of Progress on Mountain Meteorology Research},
	journal = {Meteorol. Monogr.},
	fjournal = {Meteorological Monographs},
	year = {2019},
	volume = {59},
	doi = {10.1175/AMSMONOGRAPHS-D-18-0022.1},
	pages = {20.1--20.73}
}

\end{document}